\numberwithin{equation}{section}
\theoremstyle{plain}
\newtheorem{theorem}{Theorem}[section]
\newtheorem{lemma}{Lemma}[section]
\theoremstyle{remark}
\newtheorem{remark}{Remark}[section]
\begin{document}

\title[Some generalizations of Feng Qi type inequalities]
{Some generalizations of Feng Qi type integral inequalities on time scales}

\author[L. Yin]{Li Yin}
\address[L. Yin]{Department of Mathematics, Binzhou University, Binzhou City, Shandong Province, 256603, China}
\email{\href{mailto: L. Yin<yinli_79@163.com>}{yinli\_79@163.com}}

\author[V. Krasniqi]{Valmir Krasniqi}
\address[V. Krasniqi]{Department of Mathematics,University of Prishtin\"{e}, Prishtin\"{e}, 10000, Republic of Kosova}
\email{\href{mailto: V. Krasniqi<vail.99@hotmail.com>}{vail.99@hotmail.com}}

\subjclass[2010]{33D05, 26D15,26E70,93C70}

\keywords{Integral inequality, calculus of time scales, delta
differentiability}

\thanks {The first author was supported by NSF of Shandong
Province under grant numbers ZR2012AQ028, and by the
Science Foundation of Binzhou University under grant BZXYL1303}.

\begin{abstract}
In this paper, we provide some new generalizations of Feng Qi type integral inequalities on
time scales by using elementary analytic methods.
\end{abstract}

\maketitle

\section{ Introduction}
The following problem has been posed by Qi in \cite{Q}: under what
 conditions does the inequality \begin{equation}
\int_a^b {f^p(x) dx \geqslant } \left(\int_a^b {f(x)dx} \right)^{p -
1}\end{equation} holds for $p>1?$ Later, this problem has been of
great interest for many mathematicians. M. Akkouchi proved the
following results in \cite[p.~124, Theorem~C]{A}.
 \begin{theorem} Let $[a,b]$ be a closed interval of $\mathbb{R}$
 and $p>1.$ For any continuous function $f(x)$ on $[a,b]$ such that
 $f(a)\geqslant0, f'(x)\geqslant{p},$ we have that \begin{equation}
\int_a^b {f^{p + 2}(x) dx \geqslant } \frac{1} {{(b - a)^{p - 1}
}}\left(\int_a^b {f(x)dx} \right)^{p +
1}.\end{equation}\end{theorem}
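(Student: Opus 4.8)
The plan is to reduce the inequality to the monotonicity of a single auxiliary function, after ``freezing'' the awkward power of $b-a$. I would set
\[
 F(t)=\int_a^t f^{p+2}(x)\,dx-\frac{1}{(b-a)^{p-1}}\left(\int_a^t f(x)\,dx\right)^{p+1},\qquad t\in[a,b],
\]
so that $F(a)=0$ while $F(b)$ is exactly the difference of the two sides of the asserted inequality. It then suffices to show that $F$ is nondecreasing on $[a,b]$; note that the integrand $f^{p+2}$ is continuous (so $F$ is well defined) because $f$ is continuous and, as I record next, nonnegative.

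Before differentiating I would note the elementary consequences of the hypotheses. Since $f'\ge p>0$, the function $f$ is strictly increasing; combined with $f(a)\ge 0$ this gives $f\ge 0$ on $[a,b]$ (indeed $f(x)\ge f(a)+p(x-a)$ by the fundamental theorem of calculus, so $f(x)>0$ for $x>a$), so every fractional power below is well defined and $f$ is differentiable wherever it is used. Monotonicity also yields
\[
 \int_a^t f(x)\,dx\;\le\;(t-a)\,f(t)\;\le\;(b-a)\,f(t),\qquad t\in[a,b],
\]
and this, together with $f'\ge p$, will be the only ``inequality input''.

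Next I would differentiate. For $t\in(a,b)$,
\[
 F'(t)=f^{p+2}(t)-\frac{p+1}{(b-a)^{p-1}}\left(\int_a^t f(x)\,dx\right)^{p}f(t)=\frac{f(t)}{(b-a)^{p-1}}\,G(t),
\]
where $G(t)=(b-a)^{p-1}f^{p+1}(t)-(p+1)\bigl(\int_a^t f(x)\,dx\bigr)^{p}$. Since $f(t)\ge 0$, the problem reduces to showing $G\ge 0$ on $[a,b]$. Here $G(a)=(b-a)^{p-1}f^{p+1}(a)\ge 0$, and a second differentiation gives
\[
 G'(t)=(p+1)\,f(t)\left[(b-a)^{p-1}f^{p-1}(t)\,f'(t)-p\left(\int_a^t f(x)\,dx\right)^{p-1}\right].
\]
Applying $f'(t)\ge p$, then the displayed estimate $\int_a^t f\le (b-a)f(t)$ and the monotonicity of $s\mapsto s^{p-1}$ on $[0,\infty)$ (valid because $p-1>0$), the bracket is at least $p(b-a)^{p-1}f^{p-1}(t)-p\bigl((b-a)f(t)\bigr)^{p-1}=0$. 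Hence $G'\ge 0$ on $(a,b)$, so $G(t)\ge G(a)\ge 0$, hence $F'\ge 0$ on $(a,b)$, hence $F(b)\ge F(a)=0$, which is the claim.

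The conceptual heart of the argument — and what I would want to pin down first — is this nested structure: choosing the constant $(b-a)^{p-1}$ rather than the variable $(t-a)^{p-1}$ inside $F$ is what makes $F'$ factor as $\frac{f(t)}{(b-a)^{p-1}}G(t)$, and only after introducing the second auxiliary function $G$ and differentiating again do the hypotheses $f'\ge p$ and $\int_a^t f\le (b-a)f(t)$ become directly usable. The one genuine technical subtlety is the behaviour at the left endpoint when $f(a)=0$: the chain-rule formulas for $\frac{d}{dt}f^{p+1}(t)$ and $\frac{d}{dt}\bigl(\int_a^t f\bigr)^{p}$ require $f(t)>0$, which holds on $(a,b]$ but not at $t=a$. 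This is harmless, since $F$ and $G$ are continuous on the closed interval $[a,b]$, so $F'\ge 0$ and $G'\ge 0$ on $(a,b)$ already force the desired monotonicity on $[a,b]$; alternatively one works on $[a+\varepsilon,b]$ and lets $\varepsilon\to 0^+$. The remaining computations are routine.
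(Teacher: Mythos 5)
Your proof is correct, and it uses exactly the two-level auxiliary-function scheme ($F$, then $G=F_1$, differentiating twice and invoking $f'\ge p$ together with $\int_a^t f\le (b-a)f(t)$) that this paper applies to all of its own Theorems 3.1--3.7; the statement itself is only quoted here from Akkouchi's paper without proof, but your argument matches the standard one. No gaps.
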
 Then, it has been obtained the
$q$-analogue of the previous result in \cite[Proposition 3.5]{BBS}
as follows.
\begin{theorem}Let $p>1$ be a real number and $f(x)$ be a function defined on
$[a,b]_q$, such that $f(a)\geqslant0, D_q{f(x)}\geqslant{p}$ for all
$x\in{(a,b]_q}.$ Then \begin{equation} \int_a^b {f^{p + 2}(x) d_q x
\geqslant } \frac{1} {{(b - a)^{p - 1} }}\left(\int_a^b {f(qx)d_q x}
\right)^{p + 1}.\end{equation}\end{theorem}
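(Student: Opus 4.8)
The plan is to transcribe the proof behind Theorem~1.1 into the quantum calculus, using only the standard toolkit: the $q$-product rule $D_q(uv)(t)=u(t)\,D_qv(t)+v(qt)\,D_qu(t)$, the $q$-Newton--Leibniz formula $\int_\alpha^\beta D_qg(x)\,d_qx=g(\beta)-g(\alpha)$ together with $D_q\int_a^t g(x)\,d_qx=g(t)$, and positivity of the Jackson integral. First I would squeeze $f$ between two linear functions on $[a,b]_q$: integrating $D_qf\ge p$ from $a$ to $x$ and from $x$ to $t$, for $a\le x\le t$ in $[a,b]_q$, yields $f(x)-f(a)\ge p(x-a)$ and $f(t)-f(x)\ge p(t-x)$. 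Hence $f(x)\ge p(x-a)\ge 0$, so $f(qx)\le f(x)$ (equivalently $D_qf\ge 0$), and
\[
  \int_a^t f(qx)\,d_qx\ \le\ \int_a^t f(x)\,d_qx\ \le\ (t-a)f(t)-p\int_a^t(t-x)\,d_qx\ =\ (t-a)f(t)-\frac{p(t-a)(qt-a)}{1+q}.
\]

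Next I would introduce the auxiliary function
\[
  \Psi(t)=(t-a)^{p-1}\int_a^t f^{p+2}(x)\,d_qx-\Bigl(\int_a^t f(qx)\,d_qx\Bigr)^{p+1},\qquad t\in[a,b]_q,
\]
for which $\Psi(a)=0$ (using $p>1$). Writing $A(t)=\int_a^t f^{p+2}(x)\,d_qx$ and $B(t)=\int_a^t f(qx)\,d_qx$, the $q$-product rule and $D_qA(t)=f^{p+2}(t)$ give $D_q\bigl[(t-a)^{p-1}A\bigr](t)=A(t)\,D_q(t-a)^{p-1}+(qt-a)^{p-1}f^{p+2}(t)$, where $A(t)\ge 0$ and $D_q(t-a)^{p-1}\ge 0$ because $0\le qt-a<t-a$ and $p-1>0$; the subtracted term is $D_q\bigl[B^{p+1}\bigr](t)=\frac{B(t)^{p+1}-B(qt)^{p+1}}{(1-q)t}$, which the convexity of $s\mapsto s^{p+1}$ bounds from above. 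Feeding in the estimate for $B$ above and $f(t)\ge p(t-a)$, the inequality $D_q\Psi(t)\ge 0$ would be reduced to an elementary algebraic estimate (in the classical limit it becomes $r^{p+2}\ge\bigl(r-\frac{p}{2}\bigr)^{p}\bigl(2r+\frac{p(p-1)}{2}\bigr)$ for $r\ge p$, dispatched by one-variable calculus). Granting $D_q\Psi\ge 0$ on $(a,b]_q$, the $q$-Newton--Leibniz formula and positivity of the Jackson integral give $\Psi(b)=\int_a^b D_q\Psi(t)\,d_qt\ge 0$, which is precisely the stated inequality; as $q\to 1$ the whole argument collapses to Akkouchi's proof of Theorem~1.1.

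The crux is the uniform estimate $D_q\Psi(t)\ge 0$ on the lattice, and it is genuinely delicate because the stated inequality is asymptotically sharp as $p\to 1^{+}$, with extremal profile $f(x)=p(x-a)$, so no step may waste anything. In particular, the crude convexity bound $B(t)^{p+1}-B(qt)^{p+1}\le(p+1)B(t)^{p}\bigl(B(t)-B(qt)\bigr)$ loses a multiplicative factor of order $p+1$ precisely at the lattice points near $a$, where $B(qt)$ is only a small fraction of $B(t)$; there one must instead keep the exact finite difference, exploited through the one-step identities $B(t)-B(qt)=(1-q)t\,f(qt)$ and $A(t)-A(qt)=(1-q)t\,f^{p+2}(t)$ and the nonnegative term $A(t)\,D_q(t-a)^{p-1}$ that a careless estimate would discard. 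A secondary nuisance is that $p-1<1$ when $1<p<2$, reversing the convexity of $(t-a)^{p-1}$. In the opposite regime of large $p$ the argument is soft: $q$-H\"older applied to $\int_a^b f(qx)\cdot 1\,d_qx$, combined with $\int_a^b f(qx)\,d_qx\ge\frac{p(b-a)(qb-a)}{1+q}$, already yields the inequality once $p$ exceeds a threshold of size roughly $1+q^{-1}$. So the real work sits in the range $1<p<2$ and in the terms adjacent to the left endpoint, where the exact finite-difference bookkeeping above has to be carried out with care.
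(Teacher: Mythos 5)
Your central step---the inequality $D_q\Psi(t)\geqslant 0$---is never actually established: you say it ``would be reduced to an elementary algebraic estimate,'' you concede that the natural convexity bound ``loses a multiplicative factor of order $p+1$'' near the left endpoint, and you explicitly defer the ``exact finite-difference bookkeeping'' for $1<p<2$ to work not carried out. That deferred inequality \emph{is} the theorem, so as written this is a plan rather than a proof. The delicacy you run into is moreover self-inflicted: by choosing $\Psi(t)=(t-a)^{p-1}A(t)-B(t)^{p+1}$ you must $q$-differentiate $(t-a)^{p-1}$ and control $D_q\bigl[B^{p+1}\bigr]$ in a single shot against the lone hypothesis $D_qf\geqslant p$, and the checking that the classical limit of your reduction is an inequality of one real variable does nothing to certify the $q$-case.

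The proof this result actually has (the paper quotes it from \cite{BBS}; the same two-level scheme drives Akkouchi's classical case and Theorems 3.1--3.6 of this paper) avoids every difficulty you flag. Put $g(x)=\int_a^x f(qt)\,d_qt$ and $F(x)=\int_a^x f^{p+2}(t)\,d_qt-\frac{1}{(b-a)^{p-1}}\,g^{p+1}(x)$. The convexity bound you dismiss as too crude, $D_q\bigl(g^{p+1}\bigr)(x)\leqslant (p+1)g^{p}(x)\,D_qg(x)=(p+1)g^{p}(x)f(qx)$, together with $f^{p+2}(x)\geqslant f(qx)f^{p+1}(x)$, gives $D_qF(x)\geqslant f(qx)F_1(x)$ with $F_1(x)=f^{p+1}(x)-\frac{p+1}{(b-a)^{p-1}}g^{p}(x)$. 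A second application of the same elementary inequalities ($u^{p+1}-v^{p+1}\geqslant (p+1)v^{p}(u-v)$ and $u^{p}-v^{p}\leqslant p\,u^{p-1}(u-v)$ for $u\geqslant v\geqslant0$), plus the crude estimate $g(x)\leqslant (x-a)f(qx)\leqslant (b-a)f(qx)$, yields $D_qF_1(x)\geqslant (p+1)f^{p}(qx)D_qf(x)-\frac{p(p+1)}{(b-a)^{p-1}}g^{p-1}(x)f(qx)\geqslant p(p+1)f^{p}(qx)-p(p+1)f^{p}(qx)=0$. Hence $F_1(x)\geqslant F_1(a)=f^{p+1}(a)\geqslant0$, so $D_qF\geqslant0$ and $F(b)\geqslant F(a)=0$, which is the assertion. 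No case split in $p$, no sharpened finite-difference estimate, and no lower bound $f(x)\geqslant p(x-a)$ is needed; the missing ingredient in your write-up is the intermediate function $F_1$, which converts the ``delicate'' endpoint analysis into two routine applications of convexity.
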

Later, V. Krasniqi and A. S. Shabani obtained some more
sufficient conditions to Qi type $h$-integral inequalities in
\cite{KS}. M. R. S. Rahmat got some $(q,h)$-analogues of integral inequalities on discrete time scales in \cite{R}. L. Yin, Q. M. Luo and F. Qi obtained some Qi type inequalities on time scales in \cite{YLQ}. For more results, we refer the reader to the papers
(\cite{BB}-\cite{BL}, \cite{CK}-\cite{HLP}, \cite{LNH}-\cite{MQ},
\cite{STY}-\cite{Y}). Recently, V. Karasniqi obtained some generalizations of Qi type inequalities in \cite{K}. His main results are following two theorems.

\begin{theorem} If $f$ is a non-negative increasing function on $[a,b]$ and satisfies $f'(x)\geqslant (t-2)(x-a)^{t-3}$ for $t\geqslant3$, then \begin{equation}
\int_a^b {f^t (x)dx - } \left( {\int_a^b {f(x)dx} } \right)^{t - 1}  \geqslant f^{t - 1} (a)\int_a^b {f(x)dx} .
\end{equation}\end{theorem}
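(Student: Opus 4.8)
The plan is to reduce everything to two applications of the principle ``if $h(a)=0$ and $h'\ge 0$ on $[a,b]$, then $h(b)\ge 0$''. Write $F(x)=\int_a^x f(s)\,ds$, so that $F(a)=0$ and $F'(x)=f(x)\ge 0$. The asserted inequality is exactly $\int_a^b f^t(s)\,ds-F(b)^{t-1}-f^{t-1}(a)F(b)\ge 0$, so I would introduce
\[
G(x)=\int_a^x f^t(s)\,ds-F(x)^{t-1}-f^{t-1}(a)F(x),\qquad x\in[a,b],
\]
note that $G(a)=0$, and compute
\[
G'(x)=f^t(x)-(t-1)F(x)^{t-2}f(x)-f^{t-1}(a)f(x)=f(x)\Bigl(f^{t-1}(x)-f^{t-1}(a)-(t-1)F(x)^{t-2}\Bigr).
\]
Since $f\ge 0$, it then suffices to establish the pointwise estimate
\[
f^{t-1}(x)-f^{t-1}(a)\ \ge\ (t-1)\,F(x)^{t-2}\qquad (a\le x\le b),
\]
after which $G(b)\ge G(a)=0$ yields the theorem.

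To obtain that estimate I would argue by monotonicity once more. Integrating the hypothesis from $a$ to $x$ gives $f(x)\ge f(a)+(x-a)^{t-2}$; more to the point, since $f$ is increasing one has the crude bound $F(x)=\int_a^x f(s)\,ds\le (x-a)f(x)$. Set
\[
\psi(x)=f^{t-1}(x)-f^{t-1}(a)-(t-1)F(x)^{t-2},
\]
so that $\psi(a)=0$ and
\[
\psi'(x)=(t-1)f^{t-2}(x)f'(x)-(t-1)(t-2)F(x)^{t-3}f(x)=(t-1)f(x)\Bigl(f^{t-3}(x)f'(x)-(t-2)F(x)^{t-3}\Bigr).
\]
Because $t-3\ge 0$, the bound $F(x)\le(x-a)f(x)$ gives $F(x)^{t-3}\le (x-a)^{t-3}f^{t-3}(x)$, and therefore, using $f'(x)\ge(t-2)(x-a)^{t-3}$,
\[
(t-2)F(x)^{t-3}\le (t-2)(x-a)^{t-3}f^{t-3}(x)\le f'(x)\,f^{t-3}(x).
\]
Hence $\psi'(x)\ge 0$ on $[a,b]$, so $\psi\ge\psi(a)=0$, which is precisely the estimate required above.

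Finally I would handle the places where $f$ may vanish. If $f(a)>0$ then $f>0$ on all of $[a,b]$ and every manipulation with the powers $f^{t-1},f^{t-2},f^{t-3}$ is harmless; if $f(a)=0$, then on a maximal initial interval $[a,c]$ with $f\equiv 0$ one has $G\equiv 0$, and the whole argument may be restarted at $c$, where $f>0$ on $(c,b]$. I would also record the elementary fact that $u\mapsto u^{s}$ is differentiable on $[0,\infty)$ for $s=t-1,\,t-2\ge 1$, which is what legitimises differentiating $G$ and $\psi$. The step I expect to be the real obstacle is discovering the intermediate inequality $f^{t-1}(x)-f^{t-1}(a)\ge (t-1)F(x)^{t-2}$ and recognising that the monotonicity bound $F(x)\le (x-a)f(x)$ is exactly the device that converts the hypothesis $f'(x)\ge(t-2)(x-a)^{t-3}$ into $\psi'\ge 0$; once that link is seen, the rest is routine.
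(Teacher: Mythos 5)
Your proof is correct, and it is essentially the argument the paper uses for its Theorem 3.1 (whose specialization to $\mathbb{T}=\mathbb{R}$, noted in Remark 3.1, is exactly this statement): your $\psi$ is the paper's auxiliary function $F_1(x)=f^{t-1}(x)-(t-1)g(x)^{t-2}$ shifted by $F_1(a)=f^{t-1}(a)$, and the key device $F(x)\le (x-a)f(x)$ is the same bound (3.3) used there. The only difference is cosmetic bookkeeping (you fold the term $f^{t-1}(a)\int_a^x f$ into $G$ from the start rather than extracting $F_1(a)$ at the end), plus your extra care about points where $f$ or $F$ vanishes, which the paper glosses over.
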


\begin{theorem} Let $p\geqslant1$. If $f$ is a non-negative increasing function on $[a,b]$ and satisfies $f'(x)\geqslant p\left( {\frac{{x - a}}{{b - a}}} \right)^{p - 1}
$, then \begin{equation}
\int_a^b {f^{p + 2}(x) d x
- } \frac{1} {{(b - a)^{p - 1} }}\left(\int_a^b {f(x)d x}
\right)^{p + 1}\geqslant f^{p+ 1} (a)\int_a^b {f(x)dx} .
\end{equation}\end{theorem}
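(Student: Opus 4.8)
I plan to use the standard Qi‑type device: freeze the constant $\frac{1}{(b-a)^{p-1}}$, replace the upper limit $b$ by a variable $x$, and show that the resulting function of $x$ is non‑decreasing on $[a,b]$ and vanishes at $a$. Throughout I write $F(x)=\int_a^x f(t)\,dt$, so that $F'(x)=f(x)$ and $F(a)=0$. The heart of the matter is the auxiliary inequality
\begin{equation*}
f^{p+1}(x)-f^{p+1}(a)\ \geqslant\ \frac{p+1}{(b-a)^{p-1}}\,F(x)^{p},\qquad a\leqslant x\leqslant b.\tag{$\ast$}
\end{equation*}
To prove $(\ast)$ I would note that both sides vanish at $x=a$ and compare derivatives: the left side has derivative $(p+1)f^{p}(x)f'(x)$ and the right side has derivative $\frac{p(p+1)}{(b-a)^{p-1}}F(x)^{p-1}f(x)$. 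Using the hypothesis $f'(x)\geqslant p\left(\frac{x-a}{b-a}\right)^{p-1}$ and then the monotonicity of $f$, which gives $F(x)=\int_a^x f\leqslant f(x)(x-a)$ and hence $F(x)^{p-1}\leqslant\bigl(f(x)(x-a)\bigr)^{p-1}$ since $p-1\geqslant0$, one finds
\[
(p+1)f^{p}(x)f'(x)\ \geqslant\ \frac{p(p+1)}{(b-a)^{p-1}}\,f(x)\,\bigl(f(x)(x-a)\bigr)^{p-1}\ \geqslant\ \frac{p(p+1)}{(b-a)^{p-1}}\,f(x)\,F(x)^{p-1};
\]
thus the left side of $(\ast)$ grows at least as fast as the right side, and $(\ast)$ follows.

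With $(\ast)$ in hand the rest is short. Put
\[
\phi(x)=\int_a^x f^{p+2}(t)\,dt-\frac{1}{(b-a)^{p-1}}F(x)^{p+1}-f^{p+1}(a)\,F(x),
\]
so that $\phi(a)=0$, and differentiate, using $F'=f$:
\[
\phi'(x)=f^{p+2}(x)-\frac{p+1}{(b-a)^{p-1}}F(x)^{p}f(x)-f^{p+1}(a)f(x)=f(x)\Bigl[f^{p+1}(x)-f^{p+1}(a)-\frac{p+1}{(b-a)^{p-1}}F(x)^{p}\Bigr]\geqslant0
\]
by $(\ast)$ and $f\geqslant0$. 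Hence $\phi$ is non‑decreasing on $[a,b]$, so $\phi(b)\geqslant\phi(a)=0$, which is exactly the asserted inequality.

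The main obstacle is establishing $(\ast)$, and in particular getting the power of $F$ right. The tempting shortcut — estimating $F(x)\leqslant f(x)(x-a)$ directly inside $\phi'(x)$, where $F$ occurs to the $p$‑th power — is too wasteful: it would force one to prove $f^{p+1}(x)-f^{p+1}(a)\geqslant(p+1)f^{p}(x)(x-a)$, which already fails for the linear functions $f(x)=f(a)+(x-a)$ in the case $p=1$, precisely the case of equality in the theorem. The monotonicity bound must instead be applied at the level of derivatives in $(\ast)$, where $F$ enters only to the power $p-1$ and the slack in $f'(x)\geqslant p\left(\frac{x-a}{b-a}\right)^{p-1}$ exactly compensates; everything after that is routine. (Differentiability of $f^{p+1}$, $F^{p}$, $F^{p+1}$ at points where $f$ vanishes causes no difficulty, since $f$ non‑negative and increasing forces $f\equiv0$ on any initial subinterval of $[a,b]$.)
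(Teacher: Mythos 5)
Your proof is correct and follows essentially the same route as the paper: the paper proves the time-scale generalization (Theorem 3.2) by introducing exactly your auxiliary function $G_1(x)=f^{p+1}(x)-\frac{p+1}{(b-a)^{p-1}}\bigl(\int_a^x f\bigr)^{p}$, showing it is non-decreasing via the hypothesis on $f'$ together with the bound $\int_a^x f\leqslant f(x)(x-a)$, and then integrating $G'(x)\geqslant f(x)G_1(a)=f(x)f^{p+1}(a)$. Your inequality $(\ast)$ is precisely the statement $G_1(x)\geqslant G_1(a)$, so the two arguments coincide.
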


The main aim of this paper is to generalize
the above results on time scales.
\section{Notations and lemmas}

\subsection{Notations}

A time scale $\mathbb{T}$ is an arbitrary nonempty closed subset of
the real numbers $\mathbb{R}$. The forward and backward jump
operators $ \sigma,\rho :{\mathbb{T}} \to {\mathbb{T}} $ are defined
by $$ \sigma(t) = \inf \left\{ {s \in {\mathbb{T}}:s
> t} \right\},
$$
$$
\rho (t) = \sup \left\{ {s \in {\mathbb{T}}:s < t} \right\},
$$
where the supremum of the empty set is defined to be the infimum of
$ {\mathbb{T}} $. A point $
 t \in {\mathbb{T}} $ is said to be right-scattered if $
 \sigma(t) > t  $ and right-dense if $
 \sigma(t) = t $, and $
 t \in {\mathbb{T}} $ with $
 t > \inf {\mathbb{T}} $ is said to be left-scattered if $
 \rho (t) < t $ and left-dense if $
 \rho (t) = t. $ 
 A function $
 g:{\mathbb{T}} \to \mathbb{R} $ is said to be rd(ld)-continuous provided $
 g $ is continuous at right(left)-dense points and has finite left(right)-sided limits at left(right)-dense points in $
 {\mathbb{T}} $. The graininess function $
 \mu~(\nu)  $ for a time scale $
 {\mathbb{T}} $ is defined by $
 \mu (t) = \sigma(t) - t (\nu (t) = t-\rho(t))$, and for every function $
 f:{\mathbb{T}} \to \mathbb{R} $ the notation $
 f^\sigma(f^\rho) $ means the composition $
 f \circ \sigma(f \circ \rho).$ We also need below the set $\mathbb{T}^\kappa$ which
 is derived from the time scale $\mathbb{T}$ as follows: If
 $\mathbb{T}$ has a left-scattered maximum $m$, then
 $\mathbb{T}^\kappa=\mathbb{T}-\{m\}$. Otherwise,
 $\mathbb{T}^\kappa=\mathbb{T}.$ Throughout this paper, we make the
blanket assumption that $a$ and $b$ are points in $\mathbb{T}$.
Often we assume $ a \leqslant b $. We then define the interval
$[a,b]$ in $\mathbb{T}$ by $[a,b]_\mathbb{T}=\{t\in{\mathbb{T}}: a
\leqslant t \leqslant b\}$.

For a function $
 f:\mathbb{T} \to \mathbb{R} $, the delta(nabla) derivative $
 f^\Delta(t)(f^\nabla(t)) $ at $
 t \in\mathbb{T} $ is defined to be the number (if it exists) such that for all $
 \varepsilon  > 0 $, there is a neighborhood $U$ of $t$ with
\begin{equation}
\begin{gathered}
 |f(\sigma \left( t \right)) - f\left( s \right) - f^\Delta  (t)(\sigma \left( t \right) - s)| < \varepsilon |\sigma \left( t \right) - s| \hfill \\
 (|f(\rho \left( t \right)) - f\left( s \right) - f^\nabla  (t)(\rho \left( t \right) - s)| < \varepsilon |\rho \left( t \right) - s|) \hfill \\
 \end{gathered}
\end{equation}
for all $
 s \in U $.
 If the delta(nabla) derivative $
 f^\Delta  (t) (f^\nabla(t))$ exits for all $
 t \in \mathbb{T} $, then we say that f is delta(nabla) differentiable on $
 \mathbb{T} $.
 We will make use of the following product and rules for the derivatives of the product $
 fg $ and the quotient $
 f/g $ (where $
 gg^\sigma(gg^\rho)   \ne 0 $) of two delta(nabla) differentiable functions $f$ and $g$,
\begin{equation}
\begin{gathered}
 (fg)^\Delta   = f^\Delta  g + f^\sigma  g^\Delta   = fg^\Delta   + f^\Delta  g^\sigma \hfill \\
 ((fg)^\nabla   = f^\nabla  g + f^\rho  g^\nabla   = fg^\nabla   + f^\nabla  g^\rho) \hfill \\
  \end{gathered}
\end{equation}

\begin{equation}
\begin{gathered}
 \left( {\frac{f}{g}} \right)^\Delta   = \frac{{f^\Delta  g - fg^\Delta  }}{{gg^\sigma  }} \hfill \\
\biggl( \left( {\frac{f}{g}} \right)^\nabla   = \frac{{f^\nabla  g - fg^\nabla }}{{gg^\rho  }}\biggr) \hfill \\
\end{gathered}
\end{equation}
Note that in the case $
 \mathbb{T} = \mathbb{R} $, we have $
 \sigma (t) = \rho (t) = t, \mu (t)= \nu (t)= 0$,
 $
 f^\Delta  (t)(f^\nabla(t)) = f'(t) $.
and in the case $
 \mathbb{T} = q\mathbb{Z} $, we have $
 \sigma (t) = t + q,\rho (t) = t - q,\mu (t)=\nu (t)=q$,
\begin{equation} f^\Delta  (t) =\frac{ f(t + q) - f(t)}{q}
\end{equation}
and
\begin{equation} f^\nabla  (t) =\frac{ f(t) - f(t-q)}{q}
\end{equation}
If $\mathbb{T}=q^{\mathbb{Z}}, q>1$, we have  $
 \sigma (t) = qt,\rho (t) = \frac{t}{q},\mu (t)=(q-1)t$,
\begin{equation} f^\Delta  (t) =\frac{ f(qt) - f(t)}{(q-1)t}, t\neq0
\end{equation}
and
\begin{equation} f^\nabla  (t) =\frac{ f(t) - f(t/q)}{\biggl(t-t/q\biggr)}, t\neq0
\end{equation}

A continuous function $f: \mathbb{T}\rightarrow\mathbb{R}$ is called
pre-differentiable with $D$, provided $D\subset{\mathbb{T}^\kappa}$,
$ \mathbb{T}^\kappa \backslash D $ is countable and contains no
right-scattered elements of $\mathbb{T}$, and $f$ is differentiable
at each $t\in{D}$. Let $f$ be rd(ld)-continuous. Then there exists a
function $F$ which is pre-differentiable with region of
differentiation $D$ such that $ F^\Delta  (x) = f(t)(F^\nabla  (x) = f(t))$ holds for all
$t\in{D}$. We define the Cauchy integral by
\begin{equation}
\begin{gathered}
\int\limits_b^c {f  (t)\Delta t = F(c) - F(b)}\hfill \\
\biggl(\int\limits_b^c {f  (t)\nabla t = F(c) - F(b)}\biggr) \hfill \\
\end{gathered}
\end{equation} where $F$ is a pre-antiderivative of $f$ and
$b,c\in\mathbb{T}$. The existence theorem \cite[p. 27, Theorem
1.74]{BP} reads as follows: Every rd(ld)-continuous function has an
antiderivative. In particular if $t_0\in\mathbb{T}$, then $F$
defined by $ F(t) = \int_{t_0 }^t {f(\tau )\Delta \tau }\biggl(F(t) = \int_{t_0 }^t {f(\tau )\nabla \tau } \biggr)$ is an
antiderivative of $f$.

If $f$ is delta(nabla) differentiable, then $f$ is continuous and
rd(ld)-continuous. We easily know that $$\sigma, \rho ,
f^\sigma (x), (f^\sigma (x))^p, f^\rho(x), (f^\rho (x))^p~~~~~p\in\mathbb{N} $$
are rd(ld)-continuous by using property of rd(ld)-continuous function. Thus,
all integrals involving main results of this paper are meaningful.
\subsection{Lemmas}

The following lemmas are useful and some of them can be found in the
book~\cite{BP}.

\begin{lemma}\cite[p.~423, Lemma~2.5]{YLQ}
Let $a, b\in{\mathbb{T}}$ and $p>1.$ Assume
$g:\mathbb{T}\to{\mathbb{R}}$ is delta differentiable at
$t\in{\mathbb{T}^\kappa}$ and non-negative, increasing function on
$[a, b]_{\mathbb{T}}$. Then \begin{equation} pg^{p - 1}(x) g^\Delta
(x) \leqslant (g^p (x))^\Delta   \leqslant p(g^\sigma  (x))^{p - 1}
g^\Delta  (x) .\end{equation}
\end{lemma}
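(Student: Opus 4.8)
The plan is to reduce the claim to an elementary estimate for powers of real numbers and then to distinguish the two possible types of the point $x$. The elementary ingredient is the following: for $p>1$ and $0\leqslant u\leqslant v$,
\begin{equation}
p\,u^{p-1}(v-u)\ \leqslant\ v^{p}-u^{p}\ \leqslant\ p\,v^{p-1}(v-u).
\end{equation}
This I would obtain from the classical mean value theorem applied to $\phi(s)=s^{p}$ on $[u,v]$, which gives $v^{p}-u^{p}=p\,\xi^{p-1}(v-u)$ for some $\xi\in[u,v]$, together with the monotonicity of $s\mapsto s^{p-1}$ on $[0,\infty)$; equivalently, it is just the convexity of $\phi$ on $[0,\infty)$.

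Before the case split I would record that, since $g$ is increasing on $[a,b]_{\mathbb{T}}$, we have $g(x)\leqslant g(\sigma(x))=g^{\sigma}(x)$ and $g^{\Delta}(x)\geqslant 0$, and that $g\geqslant 0$ together with continuity of $g$ makes $g^{p}$, $(g^{\sigma})^{p}$ and $g^{p-1}$ rd-continuous, so every term in the statement is well defined. Now suppose first that $x$ is right-scattered, i.e.\ $\mu(x)=\sigma(x)-x>0$; then $g^{\Delta}(x)=\bigl(g^{\sigma}(x)-g(x)\bigr)/\mu(x)$, and since $g^{p}$ is continuous, $(g^{p}(x))^{\Delta}=\bigl((g^{\sigma}(x))^{p}-(g(x))^{p}\bigr)/\mu(x)$ as well. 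Applying the elementary inequality with $u=g(x)$ and $v=g^{\sigma}(x)$ and dividing by $\mu(x)>0$ then yields exactly
\[
p\,g^{p-1}(x)\,g^{\Delta}(x)\ \leqslant\ (g^{p}(x))^{\Delta}\ \leqslant\ p\,(g^{\sigma}(x))^{p-1}\,g^{\Delta}(x).
\]
If instead $x$ is right-dense, then $\sigma(x)=x$, so $g^{\sigma}(x)=g(x)$ and delta differentiability at $x$ agrees with ordinary differentiability; the ordinary chain rule for $s\mapsto s^{p}$ on $[0,\infty)$ gives $(g^{p}(x))^{\Delta}=p\,g^{p-1}(x)\,g'(x)=p\,g^{p-1}(x)\,g^{\Delta}(x)$, so all three expressions coincide and the inequalities hold with equality.

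I expect the only delicate point to be the right-dense case, where one must justify that at a right-dense point delta differentiability coincides with classical differentiability and that the classical chain rule for $s\mapsto s^{p}$ applies on all of $[0,\infty)$, in particular at points with $g(x)=0$ (there the derivative of $s\mapsto s^{p}$ equals $0$ because $p>1$). An alternative that avoids the case distinction is to invoke the time-scales (P\"otzsche) chain rule
\[
(g^{p}(x))^{\Delta}=p\left(\int_{0}^{1}\bigl[(1-h)\,g(x)+h\,g^{\sigma}(x)\bigr]^{p-1}\,dh\right)g^{\Delta}(x),
\]
and to bound the convex combination $(1-h)\,g(x)+h\,g^{\sigma}(x)$ between $g(x)$ and $g^{\sigma}(x)$; since $g^{\Delta}(x)\geqslant 0$, integrating in $h$ and multiplying by $p\,g^{\Delta}(x)$ immediately produces both inequalities. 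This route is shorter but relies on that chain rule as a black box rather than on purely elementary analytic methods.
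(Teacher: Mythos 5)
Your proof is correct, but it takes a genuinely different route from the one the paper relies on. The paper imports Lemma 2.1 from \cite{YLQ} without reproving it, and the proof it does exhibit for the nabla analogue (Lemma 2.4) is purely algebraic: the product rule gives $(g^2)^\Delta=(g+g^\sigma)g^\Delta$, induction yields the telescoping identity $(g^p)^\Delta=\bigl(g^{p-1}+g^\sigma g^{p-2}+\cdots+(g^\sigma)^{p-1}\bigr)g^\Delta$, and the bounds follow by replacing each factor by $g$ or by $g^\sigma$ using monotonicity. That argument only makes sense for integer exponents, which is consistent with the paper's earlier remark restricting to $p\in\mathbb{N}$ for rd-continuity, even though the lemma is stated for real $p>1$. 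Your argument --- splitting into right-scattered and right-dense points, using $f^\Delta(t)=\bigl(f(\sigma(t))-f(t)\bigr)/\mu(t)$ at scattered points together with the mean value theorem (convexity of $s\mapsto s^p$), and the classical chain rule at dense points --- proves the lemma in the stated generality of arbitrary real $p>1$, at the cost of a case analysis and a small amount of care at points where $g(x)=0$ (which you correctly flag: there $\phi'(0)=0$ since $p>1$). Your alternative via the P\"otzsche chain rule is the standard one-line textbook proof and likewise covers real exponents. In short: the paper's (implicit) proof is an elementary algebraic identity valid for integers; yours buys the full real-exponent statement, which is actually what the lemma claims.
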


\begin{lemma}\cite[p. 28, Theorem 1.76]{BP} If $f^{\Delta}(x)\geqslant0(f^{\nabla}(x)\geqslant0)$, then $f(x)$ is
nondecreasing.\end{lemma}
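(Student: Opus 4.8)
The plan is to obtain the monotonicity of $f$ from the definition of the Cauchy integral recalled in the preliminaries together with the sign-preserving property of that integral. First I would fix $s,t\in\mathbb{T}$ with $s\leqslant t$. Since $f$ is delta differentiable on $\mathbb{T}^\kappa$ it is continuous, hence rd-continuous, and therefore possesses a delta antiderivative; because $f$ is itself such an antiderivative, the definition $\int_b^c g(\tau)\Delta\tau=F(c)-F(b)$ gives
\[
 f(t)-f(s)=\int_s^t f^\Delta(\tau)\,\Delta\tau .
\]

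Next I would invoke that the $\Delta$-integral preserves sign: if $g\geqslant0$ on $[s,t)_{\mathbb{T}}$, then $\int_s^t g(\tau)\,\Delta\tau\geqslant0$, which is immediate from the construction of the integral through pre-antiderivatives. Applying this with $g=f^\Delta\geqslant0$ yields $f(t)-f(s)\geqslant0$, i.e. $f(s)\leqslant f(t)$; as $s\leqslant t$ were arbitrary, $f$ is nondecreasing. The nabla case is entirely parallel: nabla differentiability forces $f$ to be ld-continuous, so $f$ is a nabla antiderivative of $f^\nabla$, whence $f(t)-f(s)=\int_s^t f^\nabla(\tau)\,\nabla\tau\geqslant0$ whenever $f^\nabla\geqslant0$ and $s\leqslant t$, giving the same conclusion.

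The only point I would dwell on --- and the single place where something could go wrong --- is the step ``$f^\Delta$ has an antiderivative, namely $f$'', since on a general time scale the $\Delta$-derivative of a differentiable function need not be rd-continuous. This is resolved by the notion of pre-differentiability already introduced above: the set where differentiability may fail is countable and contains no right-scattered points, which is exactly what the Cauchy integral and its sign-preservation require. Alternatively one can bypass integration and argue directly from the mean value theorem on time scales, obtaining $f(t)-f(s)\geqslant (t-s)\,\inf_{[s,t)_{\mathbb{T}}}f^\Delta\geqslant0$. Beyond this, the argument is a one-line sign chase and presents no real obstacle.
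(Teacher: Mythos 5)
The paper gives no proof of this lemma at all: it is quoted directly from Bohner and Peterson \cite[p.~28, Theorem~1.76]{BP}, so there is no argument of the authors' to measure yours against. Your conclusion is right and your overall strategy is the standard one, but the main route as you have written it is circular. The ``sign-preserving property'' $g\geqslant 0\Rightarrow\int_s^t g(\tau)\,\Delta\tau\geqslant 0$ is \emph{not} immediate from the construction of the integral through pre-antiderivatives: by that construction the integral equals $G(t)-G(s)$ for a pre-antiderivative $G$ of $g$, and deducing $G(t)\geqslant G(s)$ from $G^\Delta=g\geqslant 0$ is exactly the lemma you are trying to prove, applied to $G$. (Indeed, in Bohner--Peterson the monotonicity of the $\Delta$-integral is \emph{derived from} this lemma, not the other way around.) The escape hatch you mention only in passing at the end is actually the correct main argument: the mean value theorem for pre-differentiable functions on time scales, proved via the induction principle and independently of any integration theory, says that if $|h^\Delta(t)|\leqslant g^\Delta(t)$ on the region of differentiation then $|h(t)-h(s)|\leqslant g(t)-g(s)$ for $s\leqslant t$; taking $h\equiv 0$ as the dominated function and the given $f$ as the dominating one yields $0\leqslant f(t)-f(s)$ directly, with the nabla case symmetric. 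So promote the mean value theorem argument to the proof proper and relegate the integral identity to a consequence rather than an ingredient.
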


\begin{lemma}\cite[p.~5, Theorem~1.75]{BP}
Assume that $f:\mathbb{T}\to{\mathbb{R}}$ is rd-continuous at
$t\in{\mathbb{T}^\kappa}$. Then
\begin{equation}
\int_t^{\sigma (t)} {f(\tau )\Delta \tau  = } f(t)\mu (t).
\end{equation}
\end{lemma}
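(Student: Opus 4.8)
The plan is to reduce the integral to a difference of values of an antiderivative and then to argue by cases according to whether the point $t$ is right-dense or right-scattered.

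First I would use the rd-continuity of $f$ together with the existence theorem recalled above (\cite[p.~27, Theorem~1.74]{BP}) to produce a pre-antiderivative $F$ of $f$, with region of differentiation $D$, so that by the very definition of the Cauchy integral
\begin{equation*}
\int_t^{\sigma(t)} f(\tau)\,\Delta\tau = F(\sigma(t)) - F(t);
\end{equation*}
thus it suffices to show $F(\sigma(t)) - F(t) = f(t)\mu(t)$. If $t$ is right-dense, then $\sigma(t)=t$ and $\mu(t)=\sigma(t)-t=0$, so both sides vanish and the identity is immediate. If instead $t$ is right-scattered, then $\sigma(t)>t$; such a $t$ cannot be a left-scattered maximum of $\mathbb{T}$, so $t\in\mathbb{T}^\kappa$, and since $\mathbb{T}^\kappa\setminus D$ contains no right-scattered points of $\mathbb{T}$ we obtain $t\in D$, whence $F$ is delta differentiable at $t$ with $F^\Delta(t)=f(t)$.

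It then remains to unwind the definition of $F^\Delta(t)$ at this right-scattered $t$: for every $\varepsilon>0$ there is a neighborhood $U$ of $t$ with $|F(\sigma(t))-F(s)-F^\Delta(t)(\sigma(t)-s)|<\varepsilon|\sigma(t)-s|$ for all $s\in U$. Choosing the admissible value $s=t\in U$ gives $|F(\sigma(t))-F(t)-f(t)(\sigma(t)-t)|<\varepsilon\,(\sigma(t)-t)$, and since $\sigma(t)-t>0$ is a fixed number, letting $\varepsilon\to 0$ forces $F(\sigma(t))-F(t)=f(t)(\sigma(t)-t)=f(t)\mu(t)$, which is the claim. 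The only mildly delicate point is the assertion that the pre-antiderivative is genuinely differentiable at a right-scattered $t$; this is not an estimate but a structural fact built into the notion of pre-differentiability, namely that the exceptional set $\mathbb{T}^\kappa\setminus D$ is countable and free of right-scattered points, so every right-scattered point of $\mathbb{T}^\kappa$ automatically lies in $D$. Everything else is a one-line computation.
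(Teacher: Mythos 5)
Your argument is correct and is essentially the standard proof of this fact: the paper itself gives no proof, merely citing Bohner--Peterson, and your case split (right-dense versus right-scattered) together with the observation that a right-scattered point of $\mathbb{T}^\kappa$ must lie in the region of differentiation $D$, followed by taking $s=t$ in the definition of $F^\Delta(t)$ to get $F(\sigma(t))-F(t)=F^\Delta(t)\mu(t)=f(t)\mu(t)$, is exactly how the cited source proceeds. No gaps.
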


\begin{lemma}
Let $a, b\in{\mathbb{T}}$ and $p>1.$ Assume
$g:\mathbb{T}\to{\mathbb{R}}$ is nabla differentiable at
$t\in{\mathbb{T}^\kappa}$ and non-negative, increasing function on
$[a, b]_{\mathbb{T}}$. Then \begin{equation} p(g^\rho  (x))^{p - 1} g^\nabla
(x) \leqslant (g^p (x))^\nabla   \leqslant p g^{p - 1}(x)
g^\nabla  (x) .\end{equation}
\end{lemma}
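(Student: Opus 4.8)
The plan is to establish the double inequality pointwise, for each relevant $x$, by adapting the proof of Lemma~2.1 to the backward setting and treating left-dense and left-scattered points separately. Since $g$ is non-negative and increasing and $\rho(x)\leqslant x$, we always have $0\leqslant g^\rho(x)\leqslant g(x)$, and, directly from the definition of the nabla derivative, $g^\nabla(x)\geqslant0$. In particular both $(g^\rho(x))^{p-1}$ and $g^{p-1}(x)$ are well defined and non-negative, since $p-1>0$.

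\emph{Case 1: $x$ is left-dense.} Then $\rho(x)=x$, so $g^\rho(x)=g(x)$ and the two outer terms $p(g^\rho(x))^{p-1}g^\nabla(x)$ and $pg^{p-1}(x)g^\nabla(x)$ coincide; it therefore suffices to check the identity $(g^p(x))^\nabla=pg^{p-1}(x)g^\nabla(x)$. At a left-dense point the nabla derivative is the corresponding (one-sided, or two-sided) ordinary limit of difference quotients, and since $u\mapsto u^p$ is differentiable on $[0,\infty)$ with derivative $pu^{p-1}$, the ordinary chain rule gives exactly this identity. Hence the claim holds, with equality throughout, at every left-dense point.

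\emph{Case 2: $x$ is left-scattered.} Then $\nu(x)=x-\rho(x)>0$, and by the definition of the nabla derivative at such a point $g^\nabla(x)=\dfrac{g(x)-g^\rho(x)}{\nu(x)}$ and $(g^p(x))^\nabla=\dfrac{g^p(x)-(g^\rho(x))^p}{\nu(x)}$. If $g(x)=g^\rho(x)$, all three terms vanish and there is nothing to prove; otherwise put $u=g^\rho(x)$ and $v=g(x)$, so that $0\leqslant u<v$, and after multiplying through by $\nu(x)>0$ the assertion becomes the elementary inequality
\[
pu^{p-1}(v-u)\leqslant v^p-u^p\leqslant pv^{p-1}(v-u)\qquad(0\leqslant u<v,\ p>1).
\]
This follows from convexity of $t\mapsto t^p$ on $[0,\infty)$: the slope $\dfrac{v^p-u^p}{v-u}$ of the chord joining $(u,u^p)$ and $(v,v^p)$ lies between the endpoint derivatives $pu^{p-1}$ and $pv^{p-1}$; equivalently, apply the mean value theorem to write $v^p-u^p=p\xi^{p-1}(v-u)$ with $u<\xi<v$ and use that $t\mapsto t^{p-1}$ is increasing.

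Putting the two cases together proves the lemma. The only slightly delicate point is the justification in Case~1 that $(g^p)^\nabla$ is governed by the ordinary chain rule at a left-dense (possibly right-scattered) point; the real-variable step in Case~2 is completely standard. Alternatively, both cases can be handled at once by invoking the nabla (P\"otzsche-type) chain rule $(g^p(x))^\nabla=\bigl[\int_0^1 p\,(g^\rho(x)+h\nu(x)g^\nabla(x))^{p-1}\,dh\bigr]g^\nabla(x)$: the base $g^\rho(x)+h\nu(x)g^\nabla(x)=(1-h)g^\rho(x)+hg(x)$ ranges over $[g^\rho(x),g(x)]$, so that, since $t\mapsto t^{p-1}$ is increasing and $g^\nabla(x)\geqslant0$, bounding the integrand below by $p(g^\rho(x))^{p-1}g^\nabla(x)$ and above by $pg^{p-1}(x)g^\nabla(x)$ and integrating in $h$ over $[0,1]$ yields the claim immediately.
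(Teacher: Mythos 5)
Your proof is correct, but it takes a genuinely different route from the paper's. The paper argues algebraically: starting from the nabla product rule $(g^2)^\nabla=(g+g^\rho)g^\nabla$ it proves by induction the identity $(g^p)^\nabla=\bigl(g^{p-1}+g^\rho g^{p-2}+\cdots+(g^\rho)^{p-1}\bigr)g^\nabla$, and then bounds each of the $p$ summands between $(g^\rho)^{p-1}$ and $g^{p-1}$ using $0\leqslant g^\rho\leqslant g$ and $g^\nabla\geqslant0$. That argument is short but only makes sense for integer exponents, so it silently restricts the stated hypothesis $p>1$ to $p\in\mathbb{N}$ (consistent with the paper's earlier remark that $(f^\rho)^p$ with $p\in\mathbb{N}$ is ld-continuous). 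Your analytic argument --- splitting into left-dense and left-scattered points and invoking, respectively, the ordinary chain rule and the convexity/mean-value inequality $pu^{p-1}(v-u)\leqslant v^p-u^p\leqslant pv^{p-1}(v-u)$ --- proves the lemma for every real $p>1$, exactly as stated; your one-line alternative via the nabla P\"otzsche chain rule does the same without the case split. The only detail worth making explicit in your Case 1 is that at a left-dense point the nabla derivative really is the limit of ordinary difference quotients along $\mathbb{T}$ and that $g$, being nabla differentiable, is continuous, so the intermediate point $\xi_s$ in $g^p(x)-g^p(s)=p\xi_s^{p-1}(g(x)-g(s))$ converges to $g(x)$; with that said, your proof is complete and strictly more general than the paper's.
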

\begin{proof}
Using (2.2), we have
\begin{equation*}
\left( {g^2 } \right)^\nabla   = \left( {g + g^\rho  } \right)g^\nabla.
\end{equation*}
So, we easily obtain
\begin{equation*}
\left( {g^p } \right)^\nabla   = \left( {g^{p - 1}  + g^\rho  g^{p - 2}  +  \cdots  + \left( {g^\rho  } \right)^{p - 1} } \right)g^\nabla  .
\end{equation*}
 by mathematical induction. Considering property of the function $g$, the proof is completed.
\end{proof}

 For more discussion on time scales, we refer the reader to \cite{BP}.
\section{ Main Results}

\begin{theorem}
Let $a, b\in{\mathbb{T}}$ and $t\geqslant3.$ Assume
$f,\sigma:\mathbb{T}\to{\mathbb{R}}$ be delta differentiable at
$t\in{\mathbb{T}^\kappa}.$ If $f$ is a non-negative, increasing
function on $[a, b]_{\mathbb{T}}$ and satisfies
\begin{equation}
f^{t - 2} (x)f^\Delta  (x) \geqslant (t - 2)(f^{\sigma ^2 } (x))^{t
- 2} (\sigma ^2 (x) - a)^{t - 3} \sigma ^\Delta  (x)
\end{equation}
where $ \sigma ^2 (x) = \sigma (\sigma (x)) $. Then \begin{equation}\begin{gathered}
\int_a^b {f^t (x)\Delta x - \left(\int_a^b {f(x)\Delta x}
\right)^{t - 1} }\hfill \\
\geqslant
f^{t - 2} (a)\left[ {f(a) - (t - 1)\mu ^{t - 2} (a)} \right]\int_a^b {f(x)\Delta x}.\end{gathered}
\end{equation}
\end{theorem}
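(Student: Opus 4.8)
The plan is to imitate the classical argument behind Theorem~1.3, but carried out in the $\Delta$-calculus, using a pair of nested auxiliary functions together with Lemmas~2.1--2.3. Write $F(y)=\int_a^y f(x)\Delta x$, so that $F$ is non-negative and increasing on $[a,b]_{\mathbb{T}}$, $F^\Delta=f$, and by Lemma~2.3 $F^\sigma=F+\mu f$; in particular $F^\sigma(a)=\mu(a)f(a)$, and since $\sigma$ is $\Delta$-differentiable a short computation with the identity $f^\sigma=f+\mu f^\Delta$ yields $(F^\sigma)^\Delta=f^\sigma\sigma^\Delta$. Set $C=f^{t-2}(a)\bigl[f(a)-(t-1)\mu^{t-2}(a)\bigr]=f^{t-1}(a)-(t-1)\bigl(\mu(a)f(a)\bigr)^{t-2}$ and define
$$H(y)=\int_a^y f^t(x)\Delta x-\bigl(F(y)\bigr)^{t-1}-C\,F(y),\qquad y\in[a,b]_{\mathbb{T}}.$$
Then $H(a)=0$, and the assertion (3.2) is exactly $H(b)\ge 0$; by Lemma~2.2 it is enough to prove $H^\Delta(y)\ge 0$ for $y\in[a,b)_{\mathbb{T}}$. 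Differentiating and invoking the upper estimate of Lemma~2.1 with $g=F$, $p=t-1$ (so $(F^{t-1})^\Delta\le(t-1)(F^\sigma)^{t-2}f$) gives
$$H^\Delta(y)\ge f(y)\,\phi(y),\qquad \phi(y):=f^{t-1}(y)-(t-1)\bigl(F^\sigma(y)\bigr)^{t-2}-C,$$
so, since $f\ge 0$, it suffices to show $\phi\ge 0$ on $[a,b]_{\mathbb{T}}$.

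This is where the choice of $C$ is used: because $F^\sigma(a)=\mu(a)f(a)$, one computes $\phi(a)=f^{t-1}(a)-(t-1)\bigl(\mu(a)f(a)\bigr)^{t-2}-C=0$, which is precisely the purpose of the extra term $-(t-1)\mu^{t-2}(a)$ in the statement (it disappears when $\mathbb{T}=\mathbb{R}$, recovering Theorem~1.3). By Lemma~2.2 again it remains to check $\phi^\Delta(y)\ge 0$. Applying the lower estimate of Lemma~2.1 to $f^{t-1}$ and the upper estimate to $(F^\sigma)^{t-2}$ (for $t=3$ the latter is just the equality $\bigl((F^\sigma)^{t-2}\bigr)^\Delta=(F^\sigma)^\Delta=f^\sigma\sigma^\Delta$), and using $(F^\sigma)^\sigma=F^{\sigma^2}$ and $(F^\sigma)^\Delta=f^\sigma\sigma^\Delta$, reduces the matter to the pointwise inequality
$$f^{t-2}(y)f^\Delta(y)\ \ge\ (t-2)\bigl(F^{\sigma^2}(y)\bigr)^{t-3}f^\sigma(y)\,\sigma^\Delta(y).$$

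This last inequality is the heart of the argument, and it is exactly where hypothesis (3.1) is consumed. Two elementary observations bridge the gap: first, $f$ increasing and $\sigma(y)\le\sigma^2(y)$ give $f^\sigma(y)\le f^{\sigma^2}(y)$; second, for a non-negative increasing $f$ one has $\int_a^c f(x)\Delta x\le f(c)\,(c-a)$, which follows from Lemma~2.2 applied to $P(y)=f(c)(y-a)-\int_a^y f$, since $P(a)=0$ and $P^\Delta(y)=f(c)-f(y)\ge 0$. Taking $c=\sigma^2(y)$ gives $F^{\sigma^2}(y)\le f^{\sigma^2}(y)\bigl(\sigma^2(y)-a\bigr)$, and raising to the non-negative power $t-3$ and combining with $f^\sigma(y)\le f^{\sigma^2}(y)$ yields
$$(t-2)\bigl(F^{\sigma^2}(y)\bigr)^{t-3}f^\sigma(y)\,\sigma^\Delta(y)\ \le\ (t-2)\bigl(f^{\sigma^2}(y)\bigr)^{t-2}\bigl(\sigma^2(y)-a\bigr)^{t-3}\sigma^\Delta(y)\ \le\ f^{t-2}(y)f^\Delta(y),$$
the final step being (3.1) verbatim. (If $f^{\sigma^2}(y)=0$ then $f$ vanishes on $[a,\sigma^2(y)]$ and $H^\Delta(y)=0$ directly.) Hence $\phi^\Delta\ge 0$, so $\phi\ge 0$, so $H^\Delta\ge 0$, so $H(b)\ge 0$, i.e.\ (3.2).

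I expect the obstacles to be technical rather than conceptual: (i) justifying $(F^\sigma)^\Delta=f^\sigma\sigma^\Delta$ and keeping track of the iterated jumps $\sigma$, $\sigma^2$ and of the differentiability of $\mu=\sigma-\mathrm{id}$; (ii) the auxiliary bound $\int_a^c f\le f(c)(c-a)$ together with ensuring that $\sigma^2(y)$ still lies in a range on which $f$ is non-negative and increasing, so that this bound and $f^\sigma\le f^{\sigma^2}$ apply; and (iii) the boundary cases $f\equiv 0$ and $t=3$, where the exponent $t-2$ appearing in Lemma~2.1 degenerates to $1$.
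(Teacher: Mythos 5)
Your proposal is correct and follows essentially the same route as the paper: the paper's proof uses $g(x)=\int_a^x f$, the auxiliary functions $F_1(x)=f^{t-1}(x)-(t-1)(g^\sigma(x))^{t-2}$ (your $\phi+C$) and the bound $g^{\sigma^2}(x)\leqslant f^{\sigma^2}(x)(\sigma^2(x)-a)$ together with $f^\sigma\leqslant f^{\sigma^2}$, exactly as you do, with the constant $F_1(a)=f^{t-2}(a)[f(a)-(t-1)\mu^{t-2}(a)]$ identified via $g^\sigma(a)=f(a)\mu(a)$. Your version merely folds that constant into the auxiliary functions from the outset and is more explicit about the identity $(g^\sigma)^\Delta=f^\sigma\sigma^\Delta$ and the degenerate case $t=3$, which the paper leaves implicit.
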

\begin{proof}
Define $$ F(x) = \int_a^x {f^t (u)\Delta u - \left(\int_a^x
{f(u)\Delta u} \right)^{t - 1} }
$$ and $
g(x) = \int_a^x {f(u)\Delta u}. $ It is easy to see
$g^{\Delta}{(x)}=f(x)$. Using Lemma 2.1, it follows that $$
\begin{gathered}
  F^\Delta  (x)  \geqslant f^t (x) - (t- 1)(g^\sigma  (x))^{t - 2} g^\Delta  (x)   \hfill \\
   = f(x)F_1 (x) \hfill \\
\end{gathered}
$$ where $
F_1 (x) = f^{t - 1} (x) - (t - 1)(g^\sigma  (x))^{t - 2} $.

Using Lemma 2.1 again, we have $$ F_1^\Delta  (x) \geqslant (t
- 1)f^{t - 2} (x)f^\Delta  (x) - (t - 1)(t - 2)(g^{\sigma ^2 }
(x))^{t - 3} f^\sigma  (x)\sigma ^\Delta (x).
$$
Since $f$ is a non-negative and increasing function, then
\begin{equation} g^{\sigma ^2 } (x) = \int_a^{\sigma ^2 (x)}
{f(u)\Delta u} \leqslant f^{\sigma ^2 } (x)(\sigma ^2 (x) - a).
\end{equation} Hence, $$
\begin{gathered}
 F_1 ^\Delta  (x) \hfill \\
 \geqslant (t - 1)[f^{t - 2} (x)f^\Delta  (x) - (t - 2)(f^{\sigma ^2 } (x))^{t - 3} f^\sigma  (x)(\sigma ^2 (x) - a)^{t - 3} \sigma ^\Delta  (x)] \hfill \\
   \geqslant (t - 1)[f^{t - 2} (x)f^\Delta  (x) - (t - 2)(f^{\sigma ^2 } (x))^{t - 2} (\sigma ^2 (x) - a)^{t - 3} \sigma ^\Delta  (x)]  \hfill \\
   \geqslant 0 \hfill \\
\end{gathered}
$$ By Lemma 2.2, we conclude that $F_1 (x)$ is an increasing function.
Hence, $$ F_1 (x) \geqslant F_1 (a)=f^{t-2}(a)[f(a)-(t-1)\mu^{t-2}(a)]
$$ which means that $$F^{\Delta}(x)\geqslant f^{t-2}(a)[f(a)-(t-1)\mu^{t-2}(a)]f(x)$$ by applying Lemma 2.3. It follows that $$ \biggl(F(x)-f^{t-2}(a)[f(a)-(t-1)\mu^{t-2}(a)]g(x)\biggr)^\Delta
\geqslant 0. $$ Thus, we have $$\begin{gathered}F(b)-f^{t-2}(a)[f(a)-(t-1)\mu^{t-2}(a)]g(b)\hfill \\
\geqslant F(a)-f^{t-2}(a)[f(a)-(t-1)\mu^{t-2}(a)]g(a)\hfill \\
=0.\hfill \\\end{gathered}$$
This finish the proof.
\end{proof}

\begin{remark}
If $\mathbb{T}=\mathbb{R}$ and $f(a)\neq0$ in Theorem 3.1, we deduce
Theorem 2.1 in \cite{K}.
\end{remark}

\begin{remark}
If $\mathbb{T}=h\mathbb{Z}$  in Theorem 3.1, Theorem 3.1 generalizes
Theorem 3.2 in \cite{R}.
\end{remark}

\begin{theorem}
Let $a, b\in{\mathbb{T}}$ and $p\geqslant1.$ Assume
$f,\sigma:\mathbb{T}\to{\mathbb{R}}$ be delta differentiable at
$t\in{\mathbb{T}^\kappa}.$ If $f$ is a non-negative, increasing
function on $[a, b]_{\mathbb{T}}$ and satisfies \begin{equation}
f^{p} (x)f^\Delta (x) \geqslant \frac{p}{(b-a)^{p-1}}\biggl(f^{\sigma^2 }
(x)\biggr)^{p} \biggl(\sigma ^2 (x) - a\biggr)^{p - 1} \sigma ^\Delta
(x).\end{equation} Then
\begin{equation} \begin{gathered}\int_a^b {f^{p+2} (x)\Delta x-
\frac{1}{(b-a)^{p-1}}\left(\int_a^b {f(x)\Delta x} \right)^{p+1}
}\hfill \\
\geqslant f^p (a)\left[ {f(a) - \frac{{p + 1}}{{(b - a)^{p - 1} }}\mu ^p (a)} \right]\int_a^b {f(x)\Delta x}
.\end{gathered}\end{equation}
\end{theorem}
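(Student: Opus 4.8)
The plan is to follow the scheme of the proof of Theorem 3.1. Define
\[
F(x) = \int_a^x f^{p+2}(u)\Delta u - \frac{1}{(b-a)^{p-1}}\left(\int_a^x f(u)\Delta u\right)^{p+1},
\qquad
g(x) = \int_a^x f(u)\Delta u ,
\]
so that $g^\Delta(x) = f(x)$ and $g$ is non-negative and increasing on $[a,b]_{\mathbb T}$. Applying the right-hand (upper) estimate in Lemma 2.1 to $g$ with exponent $p+1$ gives $(g^{p+1}(x))^\Delta \leqslant (p+1)(g^\sigma(x))^{p} g^\Delta(x)$, hence
\[
F^\Delta(x) \geqslant f^{p+2}(x) - \frac{p+1}{(b-a)^{p-1}}(g^\sigma(x))^{p} g^\Delta(x) = f(x) F_1(x),
\]
where $F_1(x) = f^{p+1}(x) - \frac{p+1}{(b-a)^{p-1}}(g^\sigma(x))^{p}$.

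Next I would show that $F_1$ is nondecreasing. Differentiating $F_1$, using the left-hand (lower) estimate in Lemma 2.1 on $f^{p+1}$, the right-hand (upper) estimate on $(g^\sigma)^{p}$, and the identity $(g^\sigma)^\Delta = f^\sigma \sigma^\Delta$, one obtains
\[
F_1^\Delta(x) \geqslant (p+1) f^{p}(x) f^\Delta(x) - \frac{p(p+1)}{(b-a)^{p-1}} (g^{\sigma^2}(x))^{p-1} f^\sigma(x) \sigma^\Delta(x).
\]
Since $f$ is non-negative and increasing, the analogue of (3.3) holds, namely $g^{\sigma^2}(x) = \int_a^{\sigma^2(x)} f(u)\Delta u \leqslant f^{\sigma^2}(x)(\sigma^2(x)-a)$; moreover $f^\sigma(x) \leqslant f^{\sigma^2}(x)$ and $\sigma^\Delta(x) \geqslant 0$. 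Substituting these bounds and then invoking the hypothesis (3.4) yields
\[
F_1^\Delta(x) \geqslant (p+1)\left[ f^{p}(x) f^\Delta(x) - \frac{p}{(b-a)^{p-1}} (f^{\sigma^2}(x))^{p} (\sigma^2(x)-a)^{p-1} \sigma^\Delta(x) \right] \geqslant 0 ,
\]
so $F_1$ is increasing by Lemma 2.2 and therefore $F_1(x) \geqslant F_1(a)$ on $[a,b]_{\mathbb T}$.

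It remains to identify $F_1(a)$. By Lemma 2.3, $g^\sigma(a) = \int_a^{\sigma(a)} f(u)\Delta u = f(a)\mu(a)$, so $F_1(a) = f^{p}(a)\left[ f(a) - \frac{p+1}{(b-a)^{p-1}}\mu^{p}(a) \right]$. Combining this with $F^\Delta(x) \geqslant f(x) F_1(x) \geqslant f(x) F_1(a)$ (using $f\geqslant 0$) shows that the function $F(x) - f^{p}(a)\left[ f(a) - \frac{p+1}{(b-a)^{p-1}}\mu^{p}(a) \right] g(x)$ has nonnegative delta derivative; evaluating it at $x=b$ and at $x=a$, where both $F$ and $g$ vanish, gives precisely (3.5).

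I expect no essential obstacle: the argument is parallel to Theorem 3.1 and uses only Lemmas 2.1--2.3. The only points needing care are bookkeeping ones: choosing the correct one-sided estimate from Lemma 2.1 at each of the two differentiation steps so that all inequalities point the same way; the identity $(g^\sigma)^\Delta = f^\sigma\sigma^\Delta$, which is where the delta differentiability of $\sigma$ enters; the sign condition $\sigma^\Delta \geqslant 0$, needed so that multiplying an inequality by $\sigma^\Delta$ preserves its direction; and the boundary value $p=1$, where the exponent on $(g^\sigma)^{p}$ degenerates to $1$, so that Lemma 2.1 should either be applied with exponent $2$ to $f^{p+1}$ or the resulting identity $((g^\sigma)^{1})^\Delta = (g^\sigma)^\Delta$ should be used directly.
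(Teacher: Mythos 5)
Your proposal is correct and follows essentially the same route as the paper's own proof: the same decomposition $F^\Delta \geqslant f\cdot F_1$, the same two-sided use of Lemma 2.1, the same bound $g^{\sigma^2}(x)\leqslant f^{\sigma^2}(x)(\sigma^2(x)-a)$ together with $f^\sigma\leqslant f^{\sigma^2}$, and the same evaluation of $F_1(a)$ via Lemma 2.3. Your explicit remarks about $(g^\sigma)^\Delta=f^\sigma\sigma^\Delta$, the sign of $\sigma^\Delta$, and the degenerate case $p=1$ only make explicit points the paper leaves implicit.
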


\begin{proof}
Define $$ G(x) = \int_a^x {f^{p+2} (t)\Delta t -
\frac{1}{(b-a)^{p-1}}\left(\int_a^x {f(t)\Delta t} \right)^{p+1} }
$$ and $
g(x) = \int_a^x {f(t)\Delta t}. $ Using Lemma 2.1, it follows that
$$
\begin{gathered}
  G^\Delta  (x) = f^{p+2} (x) - \frac{1}{(b-a)^{p-1}}(g^{p + 1} (x))^\Delta   \hfill \\
   \geqslant f^{p+2} (x) - \frac{p+1}{(b-a)^{p-1}}(g^\sigma  (x))^{p} g^\Delta  (x)  \hfill \\
   \geqslant f(x)\left[f^{p+1} (x) - \frac{p+1}{(b-a)^{p-1}}(g^\sigma  (x))^{p}\right]\hfill \\
   =f(x)G_1 (x)  \hfill \\
\end{gathered}
$$ where $
G_1 (x) = f^{p +1} (x) - \frac{p+1}{(b-a)^{p-1}}(g^\sigma  (x))^{p} $.

Using Lemma 2.1 and (3.3), we have $$ \begin{gathered}
G_1^\Delta (x)\hfill \\ 
\geqslant (p + 1)f^{p } (x)f^\Delta  (x) -
\frac{p(p+1)}{(b-a)^{p-1}}\biggl(g^{\sigma ^2
} (x)\biggr)^{p - 1} f^{\sigma}(x)\sigma ^\Delta (x)\hfill \\
\geqslant (p + 1)\left[f^{p } (x)f^\Delta  (x) -
\frac{p}{(b-a)^{p-1}}\biggl(f^{\sigma ^2 } (x)\biggr)^{p} \biggl(\sigma ^2 (x) - a\biggr)^{p
- 1}\sigma ^\Delta (x)\right]\hfill \\
\geqslant0\hfill \\\end{gathered}.
$$ Similar to the proof of Theorem 3.1, we have $$
\begin{array}{l}
 G^\Delta  (x) \ge f(x)G_1 (a) \\
  \Leftrightarrow \left( {G(x) - g(x)G_1 (a)} \right)^\Delta   \geqslant 0 \\
 \end{array}
$$ which implies $$
G(x) - g(x)G_1 (a) \geqslant G(a) - g(a)G_1 (a) = 0.
$$The
proof is complete.
\end{proof}

\begin{remark}
If $\mathbb{T}=\mathbb{R}$ and $f(a)\neq0$ in Theorem 3.2, we deduce Theorem 2.2 in
\cite{K}.
\end{remark}

\begin{remark}
If $\mathbb{T}=h\mathbb{Z}$  in Theorem 3.2, Theorem 3.2 generalizes
Theorem 3.3 in \cite{R}.
\end{remark}

\begin{theorem}
Let $a, b\in{\mathbb{T}}$ and $p\geqslant3.$ Assume
$f,\sigma:\mathbb{T}\to{\mathbb{R}}$ be delta differentiable at
$t\in{\mathbb{T}^\kappa}.$ If $f$ is a non-negative, increasing
function on $[a, b]_{\mathbb{T}}$ and satisfies \begin{equation}
f^{p - 3} (x)f^\Delta  (x) \geqslant (p - 2)\biggl(f^{\sigma^2 } (x)\biggr)^{p -
3} \biggl(\sigma ^2 (x) - a\biggr)^{p - 3} \sigma ^\Delta  (x).\end{equation}
 Then
\begin{equation}
\begin{gathered} \int_a^b {f^p (x)\Delta x}-
\left(\int_a^b {f^{\rho}(x)\Delta x} \right)^{p - 1}\hfill \\
\geqslant (f (a))^{p-2}[f(a) - (p - 1)\mu ^{p - 2}(a) ]\int_a^b {f(\rho(x))\Delta x}.\hfill \\
\end{gathered}
\end{equation}
\end{theorem}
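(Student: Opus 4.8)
The plan is to imitate the proof of Theorem 3.1 almost verbatim, the only structural change being that the ``inner'' antiderivative is built from $f^\rho$ rather than from $f$; this will force one additional monotonicity estimate at the left endpoint.

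First I would set
\[
F(x)=\int_a^x f^p(u)\Delta u-\left(\int_a^x f^\rho(u)\Delta u\right)^{p-1},\qquad g(x)=\int_a^x f^\rho(u)\Delta u ,
\]
so that $F(a)=g(a)=0$. Since $f^\rho$ is non-negative and rd-continuous, $g$ is a non-negative, increasing, delta differentiable function with $g^\Delta=f^\rho$. Applying Lemma 2.1 to $g$ with exponent $p-1>1$ gives $(g^{p-1}(x))^\Delta\leqslant(p-1)(g^\sigma(x))^{p-2}g^\Delta(x)$, and using $f^p(x)=f^{p-1}(x)f(x)\geqslant f^{p-1}(x)f^\rho(x)$ (valid because $f$ is non-negative and increasing) one obtains
\[
F^\Delta(x)\geqslant f^p(x)-(p-1)(g^\sigma(x))^{p-2}f^\rho(x)\geqslant f^\rho(x)F_1(x),\qquad F_1(x):=f^{p-1}(x)-(p-1)(g^\sigma(x))^{p-2}.
\]

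Next I would prove that $F_1$ is increasing on $[a,b]_{\mathbb T}$. By Lemma 2.1 (lower bound, exponent $p-1$), $(f^{p-1}(x))^\Delta\geqslant(p-1)f^{p-2}(x)f^\Delta(x)$; by Lemma 2.1 (upper bound, exponent $p-2\geqslant1$, the case $p=3$ being trivial) applied to the non-negative increasing function $g^\sigma$, together with $(g^\sigma)^\Delta(x)=f^\rho(\sigma(x))\sigma^\Delta(x)$ (justified as in the proof of Theorem 3.1, using that $\sigma$ is delta differentiable), one gets $((g^\sigma(x))^{p-2})^\Delta\leqslant(p-2)(g^{\sigma^2}(x))^{p-3}f^\rho(\sigma(x))\sigma^\Delta(x)$. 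Since $f$ is non-negative and increasing, $g^{\sigma^2}(x)=\int_a^{\sigma^2(x)}f^\rho(u)\Delta u\leqslant\int_a^{\sigma^2(x)}f(u)\Delta u\leqslant f^{\sigma^2}(x)(\sigma^2(x)-a)$, and $\rho(\sigma(x))\leqslant x$ gives $f^\rho(\sigma(x))\leqslant f(x)$. Collecting these estimates and factoring out $(p-1)f(x)$,
\[
F_1^\Delta(x)\geqslant(p-1)f(x)\Big[f^{p-3}(x)f^\Delta(x)-(p-2)(f^{\sigma^2}(x))^{p-3}(\sigma^2(x)-a)^{p-3}\sigma^\Delta(x)\Big]\geqslant0
\]
by hypothesis (3.5). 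Hence, by Lemma 2.2, $F_1(x)\geqslant F_1(a)$ for every $x\in[a,b]_{\mathbb T}$.

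Finally, Lemma 2.3 yields $g^\sigma(a)=\int_a^{\sigma(a)}f^\rho(u)\Delta u=f^\rho(a)\mu(a)$, and since $0\leqslant f^\rho(a)\leqslant f(a)$ and $p-2\geqslant1$,
\[
F_1(a)=f^{p-1}(a)-(p-1)(f^\rho(a))^{p-2}\mu^{p-2}(a)\geqslant(f(a))^{p-2}\big[f(a)-(p-1)\mu^{p-2}(a)\big]=:c .
\]
Therefore $F^\Delta(x)\geqslant f^\rho(x)F_1(a)\geqslant cf^\rho(x)=cg^\Delta(x)$, so $(F-cg)^\Delta(x)\geqslant0$; by Lemma 2.2 the function $F-cg$ is nondecreasing and $F(b)-cg(b)\geqslant F(a)-cg(a)=0$, which is precisely inequality (3.6). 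The only delicate point I anticipate is the $\sigma$/$\rho$ bookkeeping: justifying $(g^\sigma)^\Delta=f^\rho(\sigma(\cdot))\sigma^\Delta$ exactly as in Theorem 3.1, the pointwise facts $\rho(\sigma(x))\leqslant x$, $\sigma^\Delta\geqslant0$ and $f^\rho\leqslant f$, and especially the last estimate at $x=a$, where the factor $f^\rho(a)$ produced by Lemma 2.3 must be absorbed into $f(a)$ in order to recover the constant displayed in (3.6).
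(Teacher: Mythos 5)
Your proof is correct and follows essentially the same route as the paper's: the same decomposition into $\int_a^x f^p(u)\Delta u-g^{p-1}(x)$ with $g(x)=\int_a^x f^{\rho}(u)\Delta u$, the same auxiliary function $f^{p-1}(x)-(p-1)(g^{\sigma}(x))^{p-2}$, and the same chain of estimates via Lemmas 2.1--2.3. You are in fact slightly more careful than the paper at the left endpoint, where the paper asserts equality $H_1(a)=(f(a))^{p-2}[f(a)-(p-1)\mu^{p-2}(a)]$ although $g^{\sigma}(a)=f^{\rho}(a)\mu(a)$ only yields the inequality $\geqslant$ via $f^{\rho}(a)\leqslant f(a)$, exactly as you observe.
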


\begin{proof}
Define $$ H(x) = \int_a^x {f^p (t)\Delta t - \left(\int_a^x
{f^{\rho}(t)\Delta t} \right)^{p - 1} }
$$ and $
g(x) = \int_a^x {f^{\rho}(t)\Delta t}. $  Using Lemma 2.1, it follows that $$
\begin{gathered}
  H^\Delta  (x) = f^p (x) - (g^{p - 1} (x))^\Delta   \hfill \\
   \geqslant f^p (x) - (p - 1)(g^\sigma  (x))^{p - 2} g^\Delta  (x) \hfill \\
   \geqslant f(\rho(x))H_1 (x) \hfill \\
\end{gathered}
$$ where $
H_1 (x) = f^{p - 1} (x) - (p - 1)(g^\sigma  (x))^{p - 2} $.

Using Lemma 2.1 and (3.3) again, we have $$
\begin{gathered}
  H_1 ^\Delta  (x) \geqslant (p - 1)f^{p - 2} (x)f^\Delta  (x) - (p - 1)(p - 2)\biggl(g^{\sigma ^2 } (x)\biggr)^{p - 3} f(x)(\sigma (x))^\Delta   \hfill \\
   \geqslant (p - 1)f(x)\biggl[f^{p - 3} (x)f^\Delta  (x) - (p - 2)\biggl(f^{\sigma^2}  (x)\biggr)^{p - 3} \biggl(\sigma ^2 (x) - a\biggr)^{p - 3} \sigma ^\Delta  (x)\biggr] \geqslant 0 \hfill \\
\end{gathered}$$ By Lemma 2.2, we conclude that $H_1 (x)$ is an increasing function.
Hence, $$
\begin{gathered}
  H_1 (x) \geqslant
   H_1 (a) = f^{p - 1} (a) - (p - 1)(g^\sigma  (a))^{p - 2}  \hfill \\
   = (f (a))^{p-2}[f(a) - (p - 1)\mu ^{p - 2}(a) ] \hfill \\
\end{gathered}
$$ which means that $(H(x) - g(x)H_1 (a))^\Delta \geqslant0.$ The proof is complete.
\end{proof}

\begin{theorem}
Let $a, b\in{\mathbb{T}}$ and $p\geqslant1.$ Assume
$f,\sigma:\mathbb{T}\to{\mathbb{R}}$ be delta differentiable at
$t\in{\mathbb{T}^\kappa}.$ If $f$ is a non-negative, increasing
function on $[a, b]_{\mathbb{T}}$ and satisfies \begin{equation}
(f^\sigma (x))^\Delta   \geqslant p\sigma ^\Delta  (x),
\end{equation} then

\begin{equation}
\begin{gathered}
\int_a^b
{(f^\sigma (x))^{p + 2} \Delta x}- \frac{1} {{(b - a)^{p -
1} }}\left(\int_a^b {f^\rho (x)\Delta x} \right)^{p +
1}\hfill \\
\geqslant \biggl[(f^\sigma  (a))^{p + 1}  - \frac{{p + 1}}
{{(b - a)^{p - 1} }}(f^\rho  (a)\mu (a))^p\biggr] \int_a^b {f(\rho(x))\Delta x}.  \hfill \\
\end{gathered}
\end{equation}
\end{theorem}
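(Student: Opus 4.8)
The plan is to mimic the proofs of Theorems~3.1--3.3. I would set
$$
g(x)=\int_a^x f^\rho(t)\Delta t,\qquad H(x)=\int_a^x (f^\sigma(t))^{p+2}\Delta t-\frac{1}{(b-a)^{p-1}}\left(\int_a^x f^\rho(t)\Delta t\right)^{p+1}.
$$
Since $f\geqslant0$ we have $g^\Delta(x)=f^\rho(x)\geqslant0$, so by Lemma~2.2 the function $g$ is non-negative and non-decreasing on $[a,b]_{\mathbb{T}}$, and $H(a)=g(a)=0$. The whole argument then reduces to producing a constant $c$ with $H^\Delta(x)\geqslant c\,g^\Delta(x)$ on $[a,b]_{\mathbb{T}}^\kappa$: for then $(H-cg)^\Delta\geqslant0$, Lemma~2.2 makes $H-cg$ non-decreasing, and comparing its values at $a$ and $b$ yields $H(b)\geqslant c\,g(b)$, which is precisely the asserted inequality once $c$ equals the bracket on its right-hand side.

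Differentiating $H$ and applying Lemma~2.1 to $g$ with exponent $p+1$ gives $(g^{p+1}(x))^\Delta\leqslant(p+1)(g^\sigma(x))^p f^\rho(x)$; since $f$ is increasing we have $f^\rho(x)\leqslant f^\sigma(x)$, hence $(f^\sigma(x))^{p+2}\geqslant f^\rho(x)(f^\sigma(x))^{p+1}$, and therefore
$$
H^\Delta(x)\geqslant f^\rho(x)\left[(f^\sigma(x))^{p+1}-\frac{p+1}{(b-a)^{p-1}}(g^\sigma(x))^p\right]=f^\rho(x)H_1(x),
$$
where $H_1(x)=(f^\sigma(x))^{p+1}-\frac{p+1}{(b-a)^{p-1}}(g^\sigma(x))^p$.

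The heart of the proof is to show that $H_1$ is non-decreasing. Lemma~2.1 applied to the non-negative increasing function $f^\sigma$ with exponent $p+1$, followed by the hypothesis $(f^\sigma(x))^\Delta\geqslant p\sigma^\Delta(x)$, gives $((f^\sigma(x))^{p+1})^\Delta\geqslant p(p+1)(f^\sigma(x))^p\sigma^\Delta(x)$. For the subtracted term, arguing as in the proof of Theorem~3.1 (Lemma~2.1 applied to $g^\sigma$, the identity $(g^\sigma)^\Delta=(g^\Delta)^\sigma\sigma^\Delta=(f^\rho)^\sigma\sigma^\Delta$, and the estimate $(f^\rho)^\sigma(x)\leqslant f(x)$), one gets $((g^\sigma(x))^p)^\Delta\leqslant p(g^{\sigma^2}(x))^{p-1}f(x)\sigma^\Delta(x)$. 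Because $f$ is increasing and $\rho(t)\leqslant\sigma(x)$ whenever $t\leqslant\sigma^2(x)$, one also has $g^{\sigma^2}(x)=\int_a^{\sigma^2(x)}f^\rho(t)\Delta t\leqslant f^\sigma(x)(\sigma^2(x)-a)$. Combining these,
$$
H_1^\Delta(x)\geqslant p(p+1)\sigma^\Delta(x)(f^\sigma(x))^{p-1}\left[f^\sigma(x)-\frac{(\sigma^2(x)-a)^{p-1}}{(b-a)^{p-1}}f(x)\right],
$$
and since $\sigma^2(x)-a\leqslant b-a$ the bracket is at least $f^\sigma(x)-f(x)\geqslant0$, so $H_1^\Delta(x)\geqslant0$. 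By Lemma~2.2, $H_1(x)\geqslant H_1(a)$ on $[a,b]_{\mathbb{T}}$, while Lemma~2.3 gives $g^\sigma(a)=\int_a^{\sigma(a)}f^\rho(t)\Delta t=f^\rho(a)\mu(a)$, so that
$$
H_1(a)=(f^\sigma(a))^{p+1}-\frac{p+1}{(b-a)^{p-1}}\left(f^\rho(a)\mu(a)\right)^p,
$$
which is exactly the bracket on the right of the conclusion. Taking $c=H_1(a)$ and using $f^\rho(x)\geqslant0$ we obtain $H^\Delta(x)\geqslant f^\rho(x)H_1(x)\geqslant H_1(a)\,g^\Delta(x)$, and the reduction of the first paragraph finishes the proof.

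I expect the one step that needs real care to be the bound on $g^{\sigma^2}(x)$: unlike in Theorem~3.3, where the cruder estimate $g^{\sigma^2}(x)\leqslant f^{\sigma^2}(x)(\sigma^2(x)-a)$ is absorbed by the tailor-made hypothesis, here one genuinely needs the sharper bound with $f^\sigma(x)$ in place of $f^{\sigma^2}(x)$. Establishing it, together with $(f^\rho)^\sigma(x)\leqslant f(x)$ and the composition identity $(g^\sigma)^\Delta=(g^\Delta)^\sigma\sigma^\Delta$, requires the familiar case distinction between right-scattered and right-dense points; the remaining computations are routine and parallel the earlier proofs.
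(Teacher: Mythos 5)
Your proposal is correct and follows essentially the same route as the paper: the same auxiliary functions ($g=\int_a^x f^\rho\Delta t$ and your $H,H_1$ are the paper's $W,W_1$), the same use of Lemmas 2.1--2.3, the same key estimate $g^{\sigma^2}(x)\leqslant f^\sigma(x)(\sigma^2(x)-a)\leqslant f^\sigma(x)(b-a)$, and the same monotonicity argument for $H-H_1(a)g$. The only (cosmetic, and in fact slightly cleaner) difference is that you pass from $(f^\sigma)^{p+2}$ to $f^\rho(f^\sigma)^{p+1}$ before splitting off $H_1$, which sidesteps a sign issue the paper glosses over, and you apply the hypothesis $(f^\sigma)^\Delta\geqslant p\sigma^\Delta$ at a slightly different point in the chain of inequalities.
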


\begin{proof}
Define $$ W(x)=\int_a^x {(f^{\sigma} (t))^p\Delta t - \left(\int_a^x
{f^{\rho}(t)\Delta t} \right)^{p - 1} }$$ and $ g(x) = \int_a^x
{f^{\rho}(t)\Delta t}. $ Using Lemma 2.1, it follows that $$
\begin{gathered}
  W^\Delta  (x) \geqslant (f^\sigma  (x))^{p + 2}  - \frac{{p + 1}}
{{(b - a)^{p - 1} }}(g^\sigma  (x))^p g^\Delta  (x) \hfill \\
   \geqslant f^\sigma  (x)\left[f^\sigma  (x))^{p + 1}  - \frac{{p + 1}}
{{(b - a)^{p - 1} }}(g^\sigma  (x))^p \right] \hfill \\
\geqslant f(\rho(x))W_1 (x) \hfill \\
\end{gathered}$$ where $
W_1 (x) = (f^\sigma  (x))^{p + 1}  - \frac{{p + 1}} {{(b - a)^{p - 1}
}}(g^\sigma  (x))^p$.

Using Lemma 2.1 again, we have $$\begin{gathered}
  W_1 ^\Delta  (x) \hfill \\
  \geqslant (p + 1)\left[\biggl(f^\sigma  (x)\biggr)^p \biggl(f^\sigma  (x)\biggr)^\Delta   - \frac{{p}}
{{(b - a)^{p - 1} }}\biggl(g^{\sigma ^2 } (x)\biggr)^{p - 1} f(x)(\sigma
(x))^\Delta  \right]\hfill \\
\end{gathered}
$$Since $f$ is a non-negative and increasing function, then \begin{equation}
g^{\sigma ^2 } (x) = \int_a^{\sigma ^2 (x)} {f^{\rho}(t)\Delta t}
\leqslant f^{\rho\sigma ^2 } (x)(\sigma ^2 (x) - a)\leqslant
f^\sigma (x)(b-a).\end{equation} Hence, $$ W_1 ^{\Delta}(x)\geqslant (p
+ 1)\biggl(f^\sigma  (x)\biggr)^p \biggl[\biggl(f^\sigma  (x)\biggr)^\Delta   - p(\sigma
(x))^\Delta\biggr ].$$By Lemma 2.2, we conclude that $W_1 (x)$ is an
increasing function. Hence, $$
\begin{gathered}
  W_1 (x) \geqslant W_1 (a)\hfill \\
  = (f^\sigma  (a))^{p + 1}  - \frac{{p + 1}}
{{(b - a)^{p - 1} }}(g^\sigma  (a))^p  \hfill \\
   = (f^\sigma  (a))^{p + 1}  - \frac{{p + 1}}
{{(b - a)^{p - 1} }}(f^\rho  (a)\mu (a))^p  \hfill \\
\end{gathered}$$ which means that $(W(x) - g(x)W_1 (a))^\Delta \geqslant0.$ The proof is complete.
\end{proof}

\begin{theorem}
Let $a, b\in{\mathbb{T}}$ and $p\geqslant3.$ Assume
$f,\sigma:\mathbb{T}\to{\mathbb{R}}$ be delta differentiable at
$t\in{\mathbb{T}^\kappa}.$ If $f$ is a non-negative, increasing
function on $[a, b]_{\mathbb{T}}$ and satisfies
\begin{equation}
 \biggl(f^\sigma  (x)\biggr)^\Delta  \geqslant (p - 2)\biggl(\sigma ^2 (x) - a\biggr)^{p - 3} \sigma ^\Delta  (x)
\end{equation}
Then
\begin{equation}
\begin{gathered} \int_a^b {f^{\sigma} (x))^{p}\Delta x}-
\left(\int_a^b {f^{\rho}(x)\Delta x} \right)^{p - 1}\hfill \\
\geqslant (f^\sigma  (a))^{p - 2} \biggl[f^\sigma  (a) - (p - 1)\mu^{p - 2}(a)\biggr] \int_a^b {f(\rho(x))\Delta x}.\hfill \\
\end{gathered}
\end{equation}
\end{theorem}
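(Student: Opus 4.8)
The plan is to imitate the proofs of Theorems 3.3 and 3.4. Put
\[
H(x)=\int_a^x (f^\sigma(t))^p\,\Delta t-\left(\int_a^x f^\rho(t)\,\Delta t\right)^{p-1},\qquad g(x)=\int_a^x f^\rho(t)\,\Delta t,
\]
so that $g^\Delta(x)=f^\rho(x)=f(\rho(x))\geqslant 0$, $H(a)=g(a)=0$, and $g$, $g^\sigma$, $f^\sigma$ are all non-negative and increasing (the last because $f$ is increasing and $\sigma$ is non-decreasing). Writing $C=(f^\sigma(a))^{p-2}\bigl[f^\sigma(a)-(p-1)\mu^{p-2}(a)\bigr]$, the goal is to show $\bigl(H(x)-Cg(x)\bigr)^\Delta\geqslant 0$ on $[a,b]_{\mathbb{T}}$; Lemma 2.2 then yields $H(b)-Cg(b)\geqslant H(a)-Cg(a)=0$, which is exactly the asserted inequality once one notes $g(b)=\int_a^b f(\rho(x))\,\Delta x$.

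First I would differentiate $H$. Applying Lemma 2.1 to $g$ with exponent $p-1$ gives $H^\Delta(x)\geqslant (f^\sigma(x))^p-(p-1)(g^\sigma(x))^{p-2}g^\Delta(x)$; since $g^\Delta(x)=f^\rho(x)$ and $(f^\sigma(x))^p\geqslant f^\rho(x)(f^\sigma(x))^{p-1}$ (because $f^\rho\leqslant f^\sigma$), this factors as $H^\Delta(x)\geqslant f(\rho(x))\,H_1(x)$ with $H_1(x)=(f^\sigma(x))^{p-1}-(p-1)(g^\sigma(x))^{p-2}$. The heart of the argument is to show $H_1$ is increasing. Differentiating again, and applying Lemma 2.1 twice — once to $(f^\sigma)^{p-1}$ for a lower bound and once to $(g^\sigma)^{p-2}$ for an upper bound, together with $(g^\sigma)^\Delta(x)=f(x)\sigma^\Delta(x)$ exactly as in the proofs of Theorems 3.3 and 3.4 — one gets
\[
H_1^\Delta(x)\geqslant (p-1)\Bigl[(f^\sigma(x))^{p-2}(f^\sigma(x))^\Delta-(p-2)(g^{\sigma^2}(x))^{p-3}f(x)\sigma^\Delta(x)\Bigr].
\]
Then I would bound $g^{\sigma^2}(x)=\int_a^{\sigma^2(x)}f^\rho(t)\,\Delta t\leqslant f^{\rho\sigma^2}(x)(\sigma^2(x)-a)=f^\sigma(x)(\sigma^2(x)-a)$ (the same estimate used in the proof of Theorem 3.4); since $p\geqslant 3$ and $f(x)\leqslant f^\sigma(x)$, this turns the bracket into $(f^\sigma(x))^{p-2}\bigl[(f^\sigma(x))^\Delta-(p-2)(\sigma^2(x)-a)^{p-3}\sigma^\Delta(x)\bigr]$, which is $\geqslant 0$ by the standing hypothesis $(f^\sigma(x))^\Delta\geqslant(p-2)(\sigma^2(x)-a)^{p-3}\sigma^\Delta(x)$. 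Hence $H_1^\Delta\geqslant 0$.

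By Lemma 2.2, $H_1(x)\geqslant H_1(a)$, and by Lemma 2.3, $g^\sigma(a)=\int_a^{\sigma(a)}f^\rho(t)\,\Delta t=f(\rho(a))\mu(a)$, so $H_1(a)=(f^\sigma(a))^{p-1}-(p-1)(f^\rho(a))^{p-2}\mu^{p-2}(a)$; since $f^\rho(a)\leqslant f^\sigma(a)$ and $p-2\geqslant 1$, this is $\geqslant (f^\sigma(a))^{p-1}-(p-1)(f^\sigma(a))^{p-2}\mu^{p-2}(a)=C$. Consequently $H^\Delta(x)\geqslant f(\rho(x))H_1(x)\geqslant f(\rho(x))\,C=C\,g^\Delta(x)$ — the last step is legitimate even when $C<0$, because $f(\rho(x))\geqslant 0$ — so $\bigl(H(x)-Cg(x)\bigr)^\Delta\geqslant 0$ and the proof closes as described above. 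The only genuinely delicate point is the jump bookkeeping: one must use the \emph{sharp} bound $g^{\sigma^2}(x)\leqslant f^\sigma(x)(\sigma^2(x)-a)$ (replacing $f^{\rho\sigma^2}$ by $f^\sigma$ via $\rho\sigma^2(x)=\sigma(x)$) rather than the cruder $f^{\sigma^2}(x)(\sigma^2(x)-a)$, which would leave an $(f^{\sigma^2})^{p-3}$ factor pointing the wrong way, and one must keep the chain $f^\rho\leqslant f\leqslant f^\sigma\leqslant f^{\sigma^2}$ oriented correctly at each invocation of monotonicity, including the final comparison $(f^\rho(a))^{p-2}\leqslant (f^\sigma(a))^{p-2}$ that produces the precise constant in the conclusion.
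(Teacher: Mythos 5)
Your proposal is correct and follows essentially the same route as the paper's own proof: the same auxiliary functions $Q(x)$ (your $H$) and $g(x)=\int_a^x f^\rho(t)\,\Delta t$, the same two applications of Lemma 2.1, the same estimate $g^{\sigma^2}(x)\leqslant f^\sigma(x)(\sigma^2(x)-a)$ borrowed from the proof of Theorem 3.4, the same evaluation $g^\sigma(a)=f^\rho(a)\mu(a)$ via Lemma 2.3, and the same monotonicity conclusion via Lemma 2.2. You are in fact somewhat more careful than the paper about the orientation of the inequalities (e.g.\ the comparison $(f^\rho(a))^{p-2}\leqslant(f^\sigma(a))^{p-2}$ and the sign discussion when $C<0$), but the argument is the same.
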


\begin{proof}
Define $$ Q(x)=\int_a^x {(f^{\sigma} (t))^p\Delta t - \left(\int_a^x
{f^{\rho}(t)\Delta t} \right)^{p - 1} }$$ and $ g(x) = \int_a^x
{f^{\rho}(t)\Delta t}. $ Using Lemma 2.1, it follows that $$
\begin{gathered}
  Q^\Delta  (x) = (f^\sigma (x))^p - (g^{p - 1} (x))^\Delta   \hfill \\
   \geqslant (f^\sigma (x))^p - (p - 1)(g^\sigma  (x))^{p - 2} g^\Delta  (x) \hfill \\
   \geqslant f(\rho(x))Q_1 (x) \hfill \\
\end{gathered}
$$ where $
Q_1 (x) = (f^\sigma (x))^{p-1} - (p - 1)(g^\sigma  (x))^{p - 2} $.

Using Lemma 2.1 and (3.10) again, we have $$
\begin{gathered}
  Q_1 ^\Delta  (x)\hfill \\ 
  \geqslant (p - 1)[(f^\sigma  (x))^{p - 2} (f^\sigma  (x))^\Delta   - (p - 2)(g^{\sigma ^2 } (x))^{p - 3} (g^\sigma  (x))^\Delta  ] \hfill \\
   \geqslant (p - 1)\biggl(f^\sigma  (x)\biggr)^{p - 2}\biggl[ (f^\sigma  (x))^\Delta   - (p - 2)\biggl(\sigma ^2 (x) - a\biggr)^{p - 3} \sigma ^\Delta  (x)\biggr] \hfill \\
   \geqslant 0 \hfill \\
\end{gathered}.$$ By Lemma 2.2, we conclude that $Q_1(x)$ is an increasing function.
Hence, $$
\begin{gathered}
  Q_1 (x) \geqslant Q_1 (a) = (f^\sigma  (a))^{p - 1}  - (p - 1)(g^\sigma  (a))^{p - 2}  \hfill \\
   \geqslant (f^\sigma  (a))^{p - 2} \biggl(f^\sigma  (a) - (p - 1)\mu^{p - 2}(a)\biggr)  \hfill \\
\end{gathered}
$$ which means that  $$\biggl(Q(x) - g(x)(f^\sigma  (a))^{p - 2} \biggl(f^\sigma  (a) - (p - 1)\mu^{p - 2}(a)\biggr)\biggr)^\Delta \geqslant0.$$ The proof is complete.
\end{proof}

Next, we generalized Feng Qi type inequalities related to nabla derivative.
\begin{theorem}
Let $a, b\in{\mathbb{T}}$ and $t\geqslant3.$ Assume
$f:\mathbb{T}\to{\mathbb{R}}$ be nabla differentiable at
$t\in{\mathbb{T}^\kappa}.$ If $f$ is a non-negative, increasing
function on $[a, b]_{\mathbb{T}}$ and satisfies
\begin{equation}
\biggl(f^\rho (x)\biggr)^{t - 2}f^\nabla  (x) \geqslant (t - 2) (f(x))^{t - 2}(x - a)^{t - 3}. 
\end{equation}
 Then \begin{equation}
\int_a^b {f^t (x)\nabla x - \left(\int_a^b {f(x)\nabla x}
\right)^{t - 1} }
\geqslant
f^{t - 1} (a)\int_a^b {f(x)\nabla x}.
\end{equation}
\end{theorem}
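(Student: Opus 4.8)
The argument runs entirely parallel to the proof of Theorem 3.1, with the delta calculus replaced by the nabla calculus and Lemma 2.1 replaced by Lemma 2.4. First I would put
$$
F(x) = \int_a^x {f^t (u)\nabla u} - \left(\int_a^x {f(u)\nabla u} \right)^{t - 1}, \qquad g(x) = \int_a^x {f(u)\nabla u},
$$
so that $g^\nabla(x) = f(x)$ and $F(a) = g(a) = 0$; the asserted inequality (3.14) is then just $F(b) \geqslant f^{t-1}(a)\,g(b)$.

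Applying the upper estimate of Lemma 2.4 to $(g^{t-1})^\nabla$ gives
$$
F^\nabla(x) = f^t(x) - (g^{t-1}(x))^\nabla \geqslant f^t(x) - (t-1)g^{t-2}(x)g^\nabla(x) = f(x)F_1(x),
$$
where $F_1(x) = f^{t-1}(x) - (t-1)g^{t-2}(x)$. To show that $F_1$ is nondecreasing I would differentiate once more, now using the \emph{lower} estimate of Lemma 2.4 on $(f^{t-1})^\nabla$ and the \emph{upper} estimate on $(g^{t-2})^\nabla$, which yields
$$
F_1^\nabla(x) \geqslant (t-1)(f^\rho(x))^{t-2}f^\nabla(x) - (t-1)(t-2)g^{t-3}(x)f(x).
$$
Since $f$ is non-negative and increasing, $g(x) = \int_a^x f(u)\,\nabla u \leqslant f(x)(x-a)$, and because $t \geqslant 3$ the exponent $t-3$ is non-negative, so $g^{t-3}(x) \leqslant (f(x))^{t-3}(x-a)^{t-3}$. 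Substituting, we obtain
$$
F_1^\nabla(x) \geqslant (t-1)\bigl[(f^\rho(x))^{t-2}f^\nabla(x) - (t-2)(f(x))^{t-2}(x-a)^{t-3}\bigr] \geqslant 0
$$
by the hypothesis (3.13).

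By Lemma 2.2, $F_1$ is then nondecreasing on $[a,b]_\mathbb{T}$, so $F_1(x) \geqslant F_1(a) = f^{t-1}(a) - (t-1)g^{t-2}(a) = f^{t-1}(a)$, since $g(a) = 0$. Note that, in contrast to the delta case of Theorem 3.1, no term involving $\mu(a)$ survives and no nabla analogue of Lemma 2.3 is needed; this is precisely why the right-hand side of (3.14) is the clean quantity $f^{t-1}(a)\int_a^b f(x)\,\nabla x$. Combining the inequalities, $F^\nabla(x) \geqslant f(x)F_1(a) = f^{t-1}(a)g^\nabla(x)$, i.e. $\bigl(F(x) - f^{t-1}(a)g(x)\bigr)^\nabla \geqslant 0$; a last application of Lemma 2.2 together with $F(a) = g(a) = 0$ gives $F(b) \geqslant f^{t-1}(a)g(b)$, which is exactly (3.14).

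The proof presents no essential difficulty; the only point that requires care is the direction bookkeeping in Lemma 2.4. Since the $g$-powers occur with a minus sign, they must be bounded from above by $m g^{m-1}g^\nabla$, whereas the $f$-power must be bounded from below by $(t-1)(f^\rho)^{t-2}f^\nabla$; choosing these two one-sided estimates correctly is what makes the chain of inequalities close in the required direction.
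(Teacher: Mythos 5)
Your proof is correct and follows essentially the same route as the paper's own argument: the same auxiliary functions $F$ and $g$, the same two applications of Lemma 2.4 (upper bound on the $g$-powers, lower bound on the $f$-power), the same estimate $g(x)\leqslant f(x)(x-a)$, and the same monotonicity conclusion via Lemma 2.2. Your remark that $F_1(a)=f^{t-1}(a)$ because $g(a)=0$, so that no $\mu(a)$ term appears in the nabla case, is exactly the point the paper uses implicitly.
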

\begin{proof}
Define $$ F(x) = \int_a^x {f^t (u)\nabla u - \left(\int_a^x
{f(u)\nabla u} \right)^{t - 1} }
$$ and $
g(x) = \int_a^x {f(t)\nabla t}. $ It is easy to see
$g^{\nabla}{(x)}=f(x)$. Using Lemma 2.4, it follows that $$
\begin{gathered}
  F^\nabla  (x)\hfill \\ 
   \geqslant f^t (x) - (t- 1)(g (x))^{t - 2} g^\nabla  (x)   \hfill \\
   = f(x)F_1 (x) \hfill \\
\end{gathered}
$$ where $
F_1 (x) = f^{t - 1} (x) - (t - 1)(g(x))^{t - 2} $.

Using Lemma 2.4 again, we have $$ F_1^\nabla  (x) \geqslant (t
- 1)(f^\rho (x))^{t - 2} (x)f^\nabla  (x) - (t - 1)(t - 2)(g
(x))^{t - 3} f(x).
$$
Since $f$ is a non-negative and increasing function, then
\begin{equation} g (x) = \int_a^{ x}
{f(t)\nabla t} \leqslant f (x)(x - a).
\end{equation} Hence, $$
\begin{gathered}
 F_1 ^\nabla  (x) \hfill \\
 \geqslant (t - 1)\biggl[(f^{\rho } (x))^{t - 2}f^\nabla  (x) - (t - 2)(f (x))^{t - 2} (x - a)^{t - 3}\biggr ] \hfill \\
    \geqslant 0 \hfill \\
\end{gathered}
$$ By Lemma 2.2, we conclude that $F_1 (x)$ is an increasing function.
Hence, $$ F_1 (x) \geqslant F_1 (a)
$$ which means that $$F^{\nabla}(x)\geqslant  F_1 (a)f(x).$$ It follows that $$ \biggl(F(x)- F_1 (a)g(x)\biggr)^\nabla
\geqslant 0. $$ Thus, we have $$F(b)-f^{t - 1} (a)g(b)
\geqslant F(a)-f^{t -1} (a)g(a)
=0.$$
This finish the proof.
\end{proof}

\begin{theorem}
Let $a, b\in{\mathbb{T}}$ and $p\geqslant1.$ Assume
$f:\mathbb{T}\to{\mathbb{R}}$ be nabla differentiable at
$t\in{\mathbb{T}^\kappa}.$ If $f$ is a non-negative, increasing
function on $[a, b]_{\mathbb{T}}$ and satisfies \begin{equation}
(f^{\rho} (x))^{p}f^\nabla (x) \geqslant \frac{p}{(b-a)^{p-1}}(f^{p }
(x)) (x - a)^{p - 1}.\end{equation} then
\begin{equation}\int_a^b {f^{p+2} (x)\nabla x-
\frac{1}{(b-a)^{p-1}}\left(\int_a^b {f(x)\nabla x} \right)^{p+1}
}
\geqslant f^{p+1} (a)\int_a^b {f(x)\nabla x}
.\end{equation}
\end{theorem}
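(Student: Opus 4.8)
The plan is to run exactly the same two-stage monotonicity argument used to prove Theorem 3.2 and Theorem 3.6, but now in the nabla calculus, so that Lemma 2.1 is everywhere replaced by Lemma 2.4 and Lemma 2.2 is used in its $f^\nabla\geqslant 0$ form. First I would set
$$G(x)=\int_a^x f^{p+2}(t)\nabla t-\frac{1}{(b-a)^{p-1}}\left(\int_a^x f(t)\nabla t\right)^{p+1},\qquad g(x)=\int_a^x f(t)\nabla t,$$
so that $g^\nabla(x)=f(x)$ and $g(a)=0$. Differentiating $G$ and using the \emph{upper} bound $(g^{p+1}(x))^\nabla\leqslant(p+1)g^p(x)g^\nabla(x)$ from Lemma 2.4, I obtain
$$G^\nabla(x)\geqslant f^{p+2}(x)-\frac{p+1}{(b-a)^{p-1}}g^p(x)f(x)=f(x)G_1(x),\qquad G_1(x):=f^{p+1}(x)-\frac{p+1}{(b-a)^{p-1}}g^p(x).$$

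The second step is to prove that $G_1$ is nondecreasing. Applying Lemma 2.4 once with its \emph{lower} bound, $(f^{p+1}(x))^\nabla\geqslant(p+1)(f^\rho(x))^p f^\nabla(x)$, and once with its \emph{upper} bound, $(g^p(x))^\nabla\leqslant p\,g^{p-1}(x)g^\nabla(x)=p\,g^{p-1}(x)f(x)$, gives
$$G_1^\nabla(x)\geqslant (p+1)(f^\rho(x))^p f^\nabla(x)-\frac{p(p+1)}{(b-a)^{p-1}}g^{p-1}(x)f(x).$$
Since $f$ is non-negative and increasing, $g(x)=\int_a^x f(t)\nabla t\leqslant f(x)(x-a)$, so $g^{p-1}(x)\leqslant f^{p-1}(x)(x-a)^{p-1}$, and substituting yields
$$G_1^\nabla(x)\geqslant (p+1)\left[(f^\rho(x))^p f^\nabla(x)-\frac{p}{(b-a)^{p-1}}f^p(x)(x-a)^{p-1}\right]\geqslant 0$$
by the hypothesis on $f$. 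Hence, by Lemma 2.2, $G_1$ is nondecreasing on $[a,b]_{\mathbb{T}}$, and since $g(a)=0$ we get $G_1(x)\geqslant G_1(a)=f^{p+1}(a)$.

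Putting the two steps together, $G^\nabla(x)\geqslant f(x)G_1(x)\geqslant f^{p+1}(a)f(x)=f^{p+1}(a)g^\nabla(x)$, that is, $\bigl(G(x)-f^{p+1}(a)g(x)\bigr)^\nabla\geqslant 0$; by Lemma 2.2 this difference is nondecreasing, so evaluating at $b$ and $a$ and using $G(a)=g(a)=0$ gives $G(b)-f^{p+1}(a)g(b)\geqslant 0$, which is precisely the claimed inequality.

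The only delicate point — and the one I would double-check most carefully — is the orientation of the two inequalities in Lemma 2.4: for the nabla power rule the \emph{upper} estimate is $p\,g^{p-1}g^\nabla$ with the plain value $g$, while the \emph{lower} estimate carries $g^\rho$, which is the reverse of the delta situation of Lemma 2.1. One must therefore use the upper bound when differentiating $G$ (a term that is subtracted) and again when handling $(g^p)^\nabla$, but the lower bound for $(f^{p+1})^\nabla$, so that the resulting chain of inequalities keeps consistent signs; and one must use $f\geqslant 0$ to preserve inequalities when multiplying by $f(x)$. The remaining ingredient is just the monotonicity of $s\mapsto s^{p-1}$ on $[0,\infty)$ for $p\geqslant 1$, which turns $g(x)\leqslant f(x)(x-a)$ into the power inequality used above; everything else reproduces the standard ``auxiliary function increasing, hence $\geqslant$ its value at $a$'' device from the earlier proofs.
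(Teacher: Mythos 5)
Your proof is correct and follows essentially the same route as the paper's: the same auxiliary functions $G$ and $G_1$, the same two applications of Lemma~2.4 with the same orientation of its upper and lower bounds, the same use of $g(x)\leqslant f(x)(x-a)$, and the same final monotonicity step via Lemma~2.2 with $G_1(a)=f^{p+1}(a)$ because $g(a)=0$. No gaps.
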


\begin{proof}
Define $$ G(x) = \int_a^x {f^{p+2} (t)\nabla t -
\frac{1}{(b-a)^{p-1}}\left(\int_a^x {f(t)\nabla t} \right)^{p+1} }
$$ and $
g(x) = \int_a^x {f(t)\nabla t}. $ Using Lemma 2.4, it follows that
$$
\begin{gathered}
  G^\nabla  (x) = f^{p+2} (x) - \frac{1}{(b-a)^{p-1}}(g^{p + 1} (x))^\nabla   \hfill \\
   \geqslant f^{p+2} (x) - \frac{p+1}{(b-a)^{p-1}}g^{p}  (x) g^\nabla  (x)  \hfill \\
   \geqslant f(x)\left[f^{p+1} (x) - \frac{p+1}{(b-a)^{p-1}}g^{p} (x)\right]\hfill \\
   =f(x)G_1 (x)  \hfill \\
\end{gathered}
$$ where $
G_1 (x) = f^{p +1} (x) - \frac{p+1}{(b-a)^{p-1}}g^{p} (x) $.

Using Lemma 2.4 again, we have $$ \begin{gathered}
G_1^\nabla (x)\hfill \\
\geqslant (p + 1)\left[(f^{\rho } (x))^{p}f^\nabla  (x) -
\frac{p}{(b-a)^{p-1}}f^{p } (x) (x - a)^{p
- 1}\right]\hfill \\
\geqslant0\hfill \\\end{gathered}.
$$ Similar to the proof of Theorem 3.6, we have $$
\begin{array}{l}
 G^\nabla  (x) \ge f(x)G_1 (a) \\
  \Leftrightarrow \left( {G(x) - g(x)G_1 (a)} \right)^\nabla   \geqslant 0 \\
 \end{array}
$$ which implies $$
G(x) - g(x)G_1 (a) \geqslant G(a) - g(a)G_1 (a) = 0.
$$The
proof is completed.
\end{proof}
\begin{remark}
Similar Theorem 3.4, Theorem 3.5 and Theorem 3.6, we easily obtain similar Feng Qi type inequalities related to the nabla derivative. We omit the details for the sake of simplicity.
\end{remark}

\end{document}